\newtheorem{definition}{Definition}[section]
\newtheorem{theorem}[definition]{Theorem}
\newtheorem{lemma}[definition]{Lemma}
\newtheorem{corollary}[definition]{Corollary}
\newtheorem{remark}{Remark}[section]
\newcommand{\re}{\mathbb R} 
\DeclareMathOperator{\diver}{div}
\DeclareMathOperator{\rot}{rot}
\title[Liouville-type theorems for the Navier-Stokes equations]{
Finite energy of generalized suitable weak solutions to the
Navier-Stokes equations
and Liouville-type theorems in two dimensional domains}
\author{Hideo Kozono, Yutaka Terasawa and Yuta Wakasugi}
\address[H. Kozono]{Department of Mathematics, Faculty of Science and Engineering,
Waseda University, Tokyo 169--8555, Japan}
\email[H. Kozono]{kozono@waseda.jp}
\address[Y. Terasawa]{Graduate School of Mathematics, Nagoya University,
Furocho Chikusaku Nagoya 464-8602, Japan}
\email[Y. Terasawa]{yutaka@math.nagoya-u.ac.jp}
\address[Y. Wakasugi]{Department of Engineering for Production and Environment,
Graduate School of Science and Engineering,
Ehime University,
3 Bunkyo-cho, Matsuyama, Ehime 790-8577, Japan}
\email[Y. Wakasugi]{wakasugi.yuta.vi@ehime-u.ac.jp}
\begin{document}
\begin{abstract}
Introducing a new notion of {\it generalized suitable weak solutions}, 
we first prove validity of the energy inequality for such a class of weak solutions to the 
 Navier-Stokes equations in the whole space $\re^n$.  
Although we need certain growth condition on the pressure, 
we may treat the class even with infinite energy quantity except for the initial velocity.  
We next handle the equation for vorticity in 2D unbounded domains. 
Under a certain condition on the asymptotic behavior at infinity, 
we prove that the vorticity and its gradient of solutions are 
both globally square integrable. 
As their applications, Loiuville-type theorems are obtained.      
\end{abstract}
\keywords{Navier-Stokes equations; energy inequalities; Liouville-type theorems}

\maketitle
\section{Introduction}
\footnote[0]{2010 Mathematics Subject Classification. 35Q30; 35B53; 76D05}

We consider the Cauchy problem for the
Navier-Stokes equations
\begin{align}
\label{ns}
	\left\{ \begin{array}{ll}
	v_t - \Delta v + (v\cdot\nabla) v + \nabla p = 0,
		&(x,t)\in \re^n\times (0,T),\\
	\diver v = 0,
		&(x,t)\in \re^n\times (0,T),\\
	v(x,0) = v_0(x),&x\in \re^n.
	\end{array}\right.
\end{align}
Here
$v = v(x,t) = (v^1(x,t), \ldots, v^n(x,t))$
and
$p = p(x,t)$
denote the velocity and the pressure, respectively, 
while 
$v_0 (x) = (v_0^1(x), \ldots, v_0^n(x))$ stands for the given initial velocity. 
Let the initial data
$v_0$
belong to
$L^2_{\sigma}(\re^n)$,
which is the closure of
$C_{0,\sigma}^{\infty}(\re^n)$, 
compactly supported $C^\infty$-solenoidal vector functions,   
with respect to the $L^2$-norm.
We recall that a measurable function
$v$ on $\re^n \times (0,T)$
is a weak solution of the Leray-Hopf class to \eqref{ns} if
$v\in L^{\infty}(0,T;L^2_{\sigma}(\re^n)) \cap L^2_{loc}([0,T);H^1_{\sigma}(\re^n))$ 
and if $v$ satisfies (\ref{ns}) in the sense that   
\[
	\int_0^T \left\{ -(v,\partial_{\tau}\Phi) + (\nabla v, \nabla \Phi)
		+ (v \cdot \nabla v, \Phi) \right\} d\tau
	= (v_0, \Phi(0))
\]
holds for all 
$\Phi \in H^1_0([0,T);H^1_{\sigma}(\re^n)\cap L^n(\re^n))$. 
For every weak solution $v(t)$ of the Leray-Hopf class to \eqref{ns} , 
it is shown by Prodi \cite{Pr} and Serrin \cite{Se63} that, 
after a redefinition of its value of $v(t)$ on a set of measure zero in the time interval 
$[0, T]$, $v(\cdot, t)$ is 
continuous for $t$ in the weak topology of $L^2_{\sigma}(\re^ n)$. 
See also Masuda \cite[Proposition 2]{Ma}.    
\par 
Serrin \cite{Se63} proved that if
$v$
is a weak solution of the Leray-Hopf class to \eqref{ns} and if 
$v \in L^s(0,T;L^q(\re^n))$ for $\frac{3}{q}+ \frac{2}{s} \le 1$ 
with some $q>3, s>2$,
then the energy identity
\begin{align}
\label{ei}
	\| v (t) \|_{L^2}^2 + 2\int_{0}^t \| \nabla v(\tau) \|_{L^2}^2 d\tau
		= \| v_0 \|_{L^2}^2 %+ 2 \int_{t_0}^t (u(\tau), f(\tau)) d\tau
		\quad (0\le t < T)
\end{align}
is valid.
Shinbrot \cite{Sh74} also proved that the same conclusion holds  
under another assumption for some   
$s>1, q\ge 4$ such that $\frac{2}{q} + \frac{2}{s} \le 1$.
Taniuchi \cite{Ta97} further extended these results to
\begin{align*}
	&\frac{2}{q}+\frac{2}{s} \le 1,\quad  \frac{3}{q}+\frac{1}{s} \le 1\quad   (n=3),\\
	&\frac{2}{q}+\frac{2}{s} \le 1,\quad  q\ge 4\quad  (n \ge 4).
\end{align*}
Recently, Farwig-Taniuchi \cite{FaTa} obtained a new class by means of domains of fractional 
powers of the Stokes operator in general unbounded domains in  $\re^3$.    
\par
\bigskip
In this paper, we give a new condition which ensures
the energy inequality, and as its application,
several Liouville-type theorems are established.  
Let us first introduce our definition of a generalized suitable weak solution.
\begin{definition}[{\it Generalized suitable weak solution}]\label{def1}
Let $v_0 \in L^2_{\sigma}(\re^n)$.
We say that the pair $(v,p)$ of measurable functions on $\re^n \times (0,T)$ is a 
generalized suitable weak solution 
of the Navier-Stokes equations \eqref{ns} if
\begin{itemize}
\item[(i)]
$v \in L^3_{loc} (\re^n \times [0,T))$,
$\nabla v \in L^2_{loc}(\re^n \times [0,T))$
and
$p \in L^{3/2}_{loc}(\re^n \times [0,T))${\rm ;} 
\item[(ii)]
For every compact subset $K \subset \re^n$,
$v(\cdot, t)$ is continuous for $t \in [0, T)$ in the weak topology of $L^2(K)$ 
and is strongly continuous in $L^2(K)$ at $t = 0$,
that is,
\begin{eqnarray*}
&& 
\int_K v(x, \cdot)\cdot \varphi(x)dx \in C([0, T)) 
\quad 
\mbox{for all $\varphi \in L^2(K)$}, \\
&& 
\lim_{t \to 0+} \int_K |v(x,t)-v_0(x)|^2 dx = 0; 
\end{eqnarray*}
\item[(iii)]
The pair $(v,p)$ satisfies the Navier-Stokes equations \eqref{ns} 
in the sense of distributions in $\re^n\times (0, T)${\rm ;} 
\item[(iv)]
The pair $(v,p)$ fulfills the generalized energy inequality
\begin{align}%
\label{gene_en_ineq}
	2 \int_0^T \int_{\re^n} |\nabla v|^2 \Phi dxdt
	&\le \int_0^T \int_{\re^n}
		\left[ |v|^2 ( \Phi_t + \Delta \Phi) + (|v|^2 + 2p ) v\cdot \nabla \Phi \right]
		dxdt
\end{align}%
for any nonnegative test function
$\Phi \in C_0^{\infty}(\re^n \times (0,T))$.
\end{itemize}
\end{definition}%
\begin{remark} 
{\rm (i)} Caffarelli-Kohn-Nirenberg \cite{CaKoNi} first introduced the notion of 
a suitable weak solution and proved the partial regularity and the Hausdorff dimension of singularities for such weak solutions.  
In comparison with the suitable weak solution given by \cite{CaKoNi}, we assume neither 
finite energy $\sup_{0 < t < T}\|v(t)\|_{L^2}^2 < \infty$ nor finite dissipation 
$\int_{0}^T\|\nabla v(\tau)\|_{L^2}^2d\tau < \infty$.  
Furthermore, we impose on  
the pressure $p$ only local $L^{\frac32}$-bound in $\re^n \times (0, T)$, 
while they \cite{CaKoNi} assume such a global bound as  
$p \in L^{\frac 54}(\re^3 \times (0, T))$ for $n=3$.  
\par
{\rm (ii)} A similar notion to our generalized suitable weak solution was considered by 
Lemari\'e-Rieusset \cite[Chapter 32]{Le} who constructed the local Leray solution 
based on the uniformly local $L^2$-space.              
\end{remark}
Our first main result is the following.

%%%%%%%%%%%%%%%%%%%%%%%%%%%%%%%%%%%%%%%%%%
\begin{theorem}\label{thm1}
Let $n\ge 2$,
$v_0\in L^2_{\sigma}(\re^n)$
and let the pair 
$(v,p)$ be
a generalized suitable weak solution of \eqref{ns}.
Suppose that there exist
$q_1, q_2, r_1, r_2$
satisfying
\begin{align}
\label{rq1}
	3\le q_1\le \frac{3n}{n-1},\ 3\le r_1 \le \infty
	\quad \mbox{and}\quad (q_1, r_1) \neq \left( \frac{3n}{n-1}, \infty \right),\\
\label{rq2}
	2 \le q_2 \le \frac{2n}{n-2},\ 2\le r_2 \le \infty
	\quad \mbox{and}\quad
		\begin{cases}
			(q_2, r_2) \neq \left( \frac{2n}{n-2}, \infty \right)
		&(n\ge 3),\\
			q_2 \neq \infty
		&(n=2)
	\end{cases}
\end{align}
such that
$v \in L^3(0,T;L^{q_1, r_1}(\re^n)) \cap L^2(0,T; L^{q_2, r_2}(\re^n))$.
We also assume that the pressure $p$ satisfies
\begin{align}%
\label{ass_p}
	\frac{1}{|B_{|x|/2}(x)|} \int_{B_{|x|/2}(x)} p(y,t) dy = o(|x|)
\quad
\mbox{as $|x| \to \infty$}
\end{align}%
for almost every $t\in (0,T)$. ($B_R(x)$ denotes the ball 
centered at $x \in \re^n$ with radius $R>0$. )
Then, we have that 
\[
	v \in L^{\infty}(0,T;L^2_{\sigma}(\re^n)) \cap L^2 (0,T; \dot{H}^1_{\sigma}(\re^n))
\]
and that 
\[
	\| v(t) \|_{L^2}^2 +2\int_0^t \| \nabla v(\tau ) \|_{L^2}^2 d\tau
	\le \| v_0 \|_{L^2}^2
\]
for all $t\in (0,T)$.
\end{theorem}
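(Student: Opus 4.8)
The plan is to test the generalized energy inequality (iv) with a product cutoff $\Phi(x,\tau)=\psi_R(x)\chi_{\ep}(\tau)$, where $\psi_R(x)=\psi(x/R)$ with $\psi\in C_0^\infty(\re^n)$, $\psi\equiv1$ on $B_1$, $\operatorname{supp}\psi\subset B_2$, $0\le\psi\le1$, and $\chi_{\ep}\in C_0^\infty((0,T))$ is a temporal cutoff rising from $0$ to $1$ on $[\ep,2\ep]$, equal to $1$ on $[2\ep,t]$, and decaying back to $0$ on $[t,t+\ep]$. Here $\nabla\psi_R$ and $\Delta\psi_R$ are supported in the annulus $A_R=\{R\le|x|\le2R\}$ with $|\nabla\psi_R|\le C/R$ and $|\Delta\psi_R|\le C/R^2$, while all local integrals below are finite thanks to the hypotheses $v\in L^3_{loc}$, $\nabla v\in L^2_{loc}$, $p\in L^{3/2}_{loc}$ of (i). The scheme is first to let $\ep\to0$ at fixed $R$ to obtain a localized energy inequality, and then to let $R\to\infty$.

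In the limit $\ep\to0$, the left-hand side $2\int\!\int|\nabla v|^2\psi_R\chi_{\ep}$ tends to $2\int_0^t\!\int|\nabla v|^2\psi_R$ by dominated convergence. The delicate term is $\int\!\int|v|^2\psi_R\chi_{\ep}'$, whose integrand concentrates on the two transition layers. On $[\ep,2\ep]$ the strong $L^2$-continuity at $t=0$ in (ii) gives convergence of the weighted average to $\int|v_0|^2\psi_R\,dx$. On $[t,t+\ep]$ the weak continuity in (ii) together with weak lower semicontinuity of the norm yields $\int|v(x,t)|^2\psi_R\,dx\le\liminf_{\tau\to t}\int|v(x,\tau)|^2\psi_R\,dx$, so that the contribution of this layer is, in the limit, at most $-\int|v(x,t)|^2\psi_R\,dx$. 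The terms carrying $\Delta\psi_R$ and $\nabla\psi_R$ converge by dominated convergence. Rearranging produces the localized inequality
\[
\int_{\re^n}|v(x,t)|^2\psi_R\,dx+2\int_0^t\int_{\re^n}|\nabla v|^2\psi_R\,dx\,d\tau\le\int_{\re^n}|v_0|^2\psi_R\,dx+I_R+II_R,
\]
where $I_R=\int_0^t\int|v|^2\Delta\psi_R$ and $II_R=\int_0^t\int(|v|^2+2p)\,v\cdot\nabla\psi_R$.

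The next step is to send $R\to\infty$ and show $I_R,II_R\to0$. For $I_R$, using $|\Delta\psi_R|\le C/R^2$ and Hölder's inequality in the Lorentz spaces, $|I_R|\le CR^{-2}|A_R|^{1-2/q_2}\int_0^t\|v\|_{L^{q_2,r_2}(A_R)}^2\,d\tau$ with $|A_R|\sim R^n$, so the exponent of $R$ equals $(n-2)-2n/q_2\le0$ precisely under \eqref{rq2}; at the borderline $q_2=\tfrac{2n}{n-2}$ (and for $n=2$, where $q_2<\infty$) the excluded endpoint $r_2<\infty$ forces the tail $\int_0^t\|v\|^2_{L^{q_2,r_2}(A_R)}\,d\tau\to0$ by absolute continuity of the integral. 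The cubic part of $II_R$ is handled identically with $|\nabla\psi_R|\le C/R$, giving the exponent $(n-1)-3n/q_1\le0$ under \eqref{rq1}, again with the endpoint excluded to guarantee vanishing of the tail.

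The pressure part of $II_R$, namely $2\int_0^t\int p\,v\cdot\nabla\psi_R$, is the main obstacle, since only a local $L^{3/2}$-bound and the growth condition \eqref{ass_p} are at hand. Here I would exploit the divergence-free condition: since $\diver v=0$ gives $\int_{\re^n}v\cdot\nabla\psi_R\,dx=0$, one may subtract any $x$-independent quantity $c_R(\tau)$ and replace $p$ by $p-c_R$. Choosing $c_R$ as an average of $p$ over the scale-$R$ balls $B_{|x|/2}(x)$ appearing in \eqref{ass_p}, which forces these averages to be $o(R)$, I expect the factor $R^{-1}$ from $\nabla\psi_R$ to defeat the admissible growth of $p$ and drive this term to $0$; controlling the pressure at this borderline scaling is the technical heart of the argument. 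Once $I_R,II_R\to0$, monotone convergence ($\psi_R\uparrow1$) in the localized inequality yields finiteness of $\|v(t)\|_{L^2}$ and of $\int_0^t\|\nabla v\|_{L^2}^2\,d\tau$ together with $\|v(t)\|_{L^2}^2+2\int_0^t\|\nabla v\|_{L^2}^2\,d\tau\le\|v_0\|_{L^2}^2$; as $v$ is weakly divergence-free and now square integrable, it belongs to $L^{\infty}(0,T;L^2_{\sigma}(\re^n))\cap L^2(0,T;\dot H^1_{\sigma}(\re^n))$.
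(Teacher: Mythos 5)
Your localization scheme is exactly the paper's: the space--time cutoff $\psi_R(x)\chi_\varepsilon(\tau)$, the use of strong $L^2$-continuity at $t=0$ on the rising layer and of weak continuity plus lower semicontinuity of the norm on the falling layer, and the treatment of the $\Delta\psi_R$ and $|v|^2v\cdot\nabla\psi_R$ terms (including the endpoint cases via vanishing of Lorentz-space tails) reproduce the paper's Lemma \ref{lem_en_ineq} and its bounds for $I_R^{(1)}$, $I_R^{(2)}$. The genuine gap is precisely where you flag uncertainty: the pressure flux. The hypothesis \eqref{ass_p} controls only the \emph{averages} of $p$ over the balls $B_{|x|/2}(x)$; it says nothing about the oscillation of $p$ inside the annulus $A_R=\{R\le |x|\le 2R\}$, so subtracting a constant $c_R(\tau)$ (legitimate, by $\diver v=0$) does not make $p-c_R$ small there in any norm: a pressure with violent small-scale oscillation but ball averages $o(|x|)$ is fully compatible with \eqref{ass_p}. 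Moreover, even granting the (unjustified) pointwise bound $|p-c_R|=o(R)$ on $A_R$, your scaling heuristic does not close: by H\"older's inequality the flux is bounded by $R^{-1}\cdot o(R)\cdot C R^{n(1-1/q_1)}\|v\|_{L^{q_1,r_1}(A_R)} = o\bigl(R^{n(1-1/q_1)}\bigr)\|v\|_{L^{q_1,r_1}(A_R)}$, and since $n(1-1/q_1)\ge 2n/3>0$ the polynomial factor diverges while the tail $\|v\|_{L^{q_1,r_1}(A_R)}$ vanishes with no rate. The factor $R^{-1}$ from $\nabla\psi_R$ is eaten by the volume of the annulus; what is actually needed is that the pressure, after removing a constant in $x$, decays at infinity in the same \emph{integral} sense as $|v|^2$, i.e.\ lies in $L^{q_1/2,r_1/2}(\re^n)$.

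That missing input is the paper's Lemma \ref{lem_p}, and it comes from the equation, not from \eqref{ass_p}. By item (iii) of Definition \ref{def1}, $-\Delta p=\sum_{i,j}\partial_{x_i}\partial_{x_j}(v_iv_j)$ in the sense of distributions; setting $p':=\sum_{i,j}R_iR_j(v_iv_j)$, the boundedness of the Riesz transforms on Lorentz spaces gives $\|p'(t)\|_{L^{q_1/2,r_1/2}}\le C\|v(t)\|_{L^{q_1,r_1}}^2$, so $p'$ inherits from $v$ exactly the global decay required. The difference $\bar p=p-p'$ is harmonic in $x$ for a.e.\ $t$ (Weyl's lemma), and here --- and only here --- does \eqref{ass_p} enter: together with the decay of the averages of $p'$ it yields $\bar p(x,t)=o(|x|)$, hence $\bar p$ is constant in $x$ by the Liouville theorem for harmonic functions. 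Your divergence-free observation then removes $\bar p(\tau)$ from the localized inequality (the paper's Lemma \ref{lem_p_prime}), and the remaining flux $\int_0^t\int p'\,v\cdot\nabla\psi_R\,dx\,d\tau$ is estimated exactly like the $|v|^3$ term, with exponent $-1+n(q_1-3)/q_1\le 0$ and vanishing tails. With this pressure decomposition inserted, the rest of your argument goes through as written.
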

%%%%%%%%%%%%%%%%%

\begin{remark} 
{\rm (i)} 
Our proof of Theorem \ref{thm1} enables us to show that 
if the pair $(v,p)$ is a smooth solution with such bounds as 
(\ref{rq1}) and (\ref{rq2}) in Theorem \ref{thm1}
and if $p$ behaves at infinity like (\ref{ass_p}),   
then we have the energy identity (\ref{ei}). 
%\begin{equation}\label{ei}
%	\| v(t) \|_{L^2}^2 +2\int_0^t \| \nabla v(\tau ) \|_{L^2}^2 d\tau
%	= \| v_0 \|_{L^2}^2.
%\end{equation}
\par
{\rm (ii)} By (\ref{rq1}) and (\ref{rq2}), it holds that  
$$
\frac23 + \frac{n}{q_1} \ge \frac{n+1}{3} > 1, 
\quad
\frac22 + \frac{n}{q_2} \ge \frac{n}{2} > 1 
\quad
\mbox{for $n \geq 3$}. 
$$
This implies that from a viewpoint of local singularities,  
our class of generalized suitable weak solutions satisfying  (\ref{rq1}) and (\ref{rq2}) 
is larger than Serrin's scaling invariant one $L^{\alpha}(0, T; L^q(\re^n))$ 
for $2/\alpha + n/q =1$ with $n \le q \le \infty$ although we impose 
on those weak solutions a rapid decay property at the spacial infinity.  
See also Remark \ref{rem:1.3}(ii) below.   
Moreover, if $w$ is a Leray-Hopf  weak solution of (\ref{ns}) in Serrin's scaling invariant class, 
then the generalized suitable weak solution $v$ with (\ref{rq1}) and (\ref{rq2}) 
fulfills that $v\equiv w$ on $\re^n\times [0, T)$.  
\par
{\rm (iii)} Besides the energy identity (\ref{ei}), there is another notion of 
the strong energy inequality which means that 
\begin{equation}\label{se}
	\| v(t) \|_{L^2}^2 +2\int_s^t \| \nabla v(\tau ) \|_{L^2}^2 d\tau
	\le \|v(s)\|_{L^2}^2
\end{equation}
for almost all $0 \le s < T$, 
including $s = 0$ and all $t >0$ such that $s\le t \le T$.  
The importance of the strong energy inequality was pointed out by Masuda \cite{Ma}.  
For every $v_0 \in L^2_{\sigma}(\re^n)$, the existence of the weak solution $v$ 
in the Leray-Hopf class satisfying (\ref{se}) 
was proved by Leray \cite{Ler} for $n =3$ and by Kato \cite{Ka} for $n=4$, respectively.  
However, it seems difficult to obtain the corresponding result to the higher dimensional case 
for $n \ge 5$.   
In addition to the condition (ii) of Definition \ref{def1}, if we assume that 
\begin{equation}\label{as}
\lim_{t \to s+0}\int_K |v(x, t) - v(x, s)|^2 dx = 0
\end{equation}
for almost all $0 \le s < T$, including $s =0$, 
then our proof of Theorem \ref{thm1} enables us to see that $v$ satisfies the strong energy inequality (\ref{se}). 
Hence, under such an additional hypothesis as (\ref{as}), 
our generalized suitable weak solution of (\ref{ns}) with (\ref{rq1}) and (\ref{rq2}) 
satisfies Leray's structure theorem(see e.g., Kato \cite[Section 4, 2]{Ka}).        
\par
{\rm (iv)}
The condition \eqref{ass_p} is not restrictive.
Indeed, if $p$ satisfies
$p(x,t) = o(|x|)$ as $|x| \to \infty$ for almost every $t \in (0, T)$, 
then we have \eqref{ass_p}.
Also, if $p$ satisfies
$p \in L^s(0,T ; L^{q,r}(\re^n))$ with some
$s,r \in [1,\infty]$ and $q \in (1,\infty]$, then \eqref{ass_p} holds. 

\end{remark}
\par
%\bigskip
An immediate consequence of this theorem is the following Liouville-type theorem. 
\begin{corollary} \label{cor1.2}
Let $n \ge 2$, and let $v_0 \equiv 0$ in $\re^n$.  
Suppose that the pair 
$(v,p)$ is a 
generalized suitable weak solution of \eqref{ns}.
If $p$ satisfies \eqref{ass_p} and if
$v \in L^3(0, T; L^{q_1, r_1}(\re^n))\cap L^2(0, T; L^{q_2, r_2}(\re^n))$ 
for such $(q_1, r_1)$ and $(q_2, r_2)$ as in (\ref{rq1}) and (\ref{rq2}), 
respectively, then it holds that
$v(x, t)\equiv 0$ on $\re^n\times (0, T)$. 
\end{corollary}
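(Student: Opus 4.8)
The plan is to invoke Theorem \ref{thm1} directly, since every hypothesis required there is already assumed in the statement of the corollary. First I would note that the pair $(v,p)$ is a generalized suitable weak solution of \eqref{ns} with initial datum $v_0 \equiv 0 \in L^2_\sigma(\re^n)$, that the integrability condition $v \in L^3(0,T;L^{q_1,r_1}(\re^n)) \cap L^2(0,T;L^{q_2,r_2}(\re^n))$ holds for exponents satisfying \eqref{rq1} and \eqref{rq2}, and that the pressure obeys the growth condition \eqref{ass_p}. Thus Theorem \ref{thm1} applies verbatim, with no further verification needed.

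Applying the theorem, I obtain both the regularity $v \in L^\infty(0,T;L^2_\sigma(\re^n)) \cap L^2(0,T;\dot H^1_\sigma(\re^n))$ and the energy inequality
\[
	\|v(t)\|_{L^2}^2 + 2\int_0^t \|\nabla v(\tau)\|_{L^2}^2\, d\tau
	\le \|v_0\|_{L^2}^2
\]
for all $t \in (0,T)$. Since $v_0 \equiv 0$ the right-hand side vanishes, and as both terms on the left are nonnegative I would conclude $\|v(t)\|_{L^2}^2 = 0$ and $\int_0^t \|\nabla v(\tau)\|_{L^2}^2\, d\tau = 0$ for every $t \in (0,T)$. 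In particular $v(\cdot,t) = 0$ in $L^2(\re^n)$ for each $t$, whence $v \equiv 0$ on $\re^n \times (0,T)$, as claimed.

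There is essentially no genuine obstacle here: the entire substance lies in Theorem \ref{thm1}, and the corollary is obtained merely by specializing to $v_0 \equiv 0$. The one detail worth emphasizing is that the energy inequality in Theorem \ref{thm1} is asserted for \emph{all} $t \in (0,T)$ rather than only for almost every $t$; this is precisely what upgrades the vanishing of $\|v(t)\|_{L^2}$ from an almost-everywhere statement to one valid at every time, and thereby delivers the pointwise-in-time Liouville conclusion $v \equiv 0$.
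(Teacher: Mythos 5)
Your proof is correct and is essentially the paper's own argument: the paper treats Corollary \ref{cor1.2} as an immediate consequence of Theorem \ref{thm1}, exactly as you do, by specializing the energy inequality to $v_0 \equiv 0$ and noting both terms on the left-hand side are nonnegative. (The paper's only written-out proof is for Corollary \ref{cor1.4}, where the extra term $C_0 V_v(t)$ forces a slightly different conclusion via $\nabla v \equiv 0$ plus spatial integrability; your direct route through $\|v(t)\|_{L^2} = 0$ is the natural one for Corollary \ref{cor1.2} and matches the paper's intent.)
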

%
%
%%%%%%%%%%%%%%%%%%%%%%%%%%%%%%%%%%%%%%%%%%%%%%%
We next deal with the exponents $(q_1, r_1)$ and $(q_2, r_2)$ in the marginal case of 
(\ref{rq1}) and (\ref{rq2}).  
 
%%%%%%%%%%%%%%%%%%%%%%%%%%%%%%%%%%%%%%%%%%%%%%
\begin{theorem}\label{thm2}
Let
$n \ge 2$,
$v_0\in L^2_{\sigma}(\re^n)$
and let the pair 
$(v,p)$ be a generalized suitable weak solution of \eqref{ns}.
Suppose that
there exist
$q_1, q_2, r_1, r_2$
satisfying
\[
	3 \le q_1 \le \frac{3n}{n-1},\ 2\le q_2 \le \frac{2n}{n-2},\ 
	3\le r_1 \le \infty, \ 2\le r_2 \le \infty
\]
and
\begin{align}
\tag{Case\,1}
	(q_1, r_1) = \left( \frac{3n}{n-1}, \infty \right),\quad
	\begin{cases}
		(q_2, r_2) \neq \left( \frac{2n}{n-2}, \infty \right)
		& (n \ge 3),\\
		q_2 \neq \infty
		& (n =2),
	\end{cases}\\
\tag{Case\,2}
	(q_1, r_1) \neq \left( \frac{3n}{n-1}, \infty \right),\quad
	\begin{cases}
		(q_2, r_2) = \left( \frac{2n}{n-2}, \infty \right)
		& (n \ge 3),\\
		q_2 = \infty
		& (n =2),
	\end{cases}\\
\tag{Case\,3}
	(q_1, r_1) = \left( \frac{3n}{n-1}, \infty \right),\quad
	\begin{cases}
		(q_2, r_2) = \left( \frac{2n}{n-2}, \infty \right)
		& (n \ge 3),\\
		q_2 = \infty
		& (n =2)
	\end{cases}
\end{align}
such that
$v \in L^3(0,T;L^{q_1,r_1}(\re^n))\cap L^2(0,T;L^{q_2,r_2}(\re^n))$.
We also assume that the pressure $p$ satisfies (\ref{ass_p}). 
Then, we have that 
\[
	v \in L^{\infty}(0,T;L^2_{\sigma}(\re^n)) \cap L^2 (0,T;\dot{H}^1_{\sigma}(\re^n))
\]
and that
\begin{align}
\label{eneq}
	\| v(t) \|_{L^2}^2 + 2\int_0^t \| \nabla v(\tau ) \|_{L^2}^2 d\tau
	\le \| v_0 \|_{L^2}^2 + C_0 V_v(t)
\end{align}
holds for all $t\in (0,T)$ with some absolute constant $C_0$, where
\[
	V_v(t) = \begin{cases}
	\| v \|_{L^3(0,t;L^{q_1,r_1})}^3
	& ({\rm Case\,1}),\\
	\| v \|_{L^2(0,t;L^{q_2,r_2})}^2
	& ({\rm Case\,2}),\\
	\| v \|_{L^3(0,t;L^{q_1,r_1})}^3 + \| v \|_{L^2(0,t;L^{q_2,r_2})}^2
	& ({\rm Case\,3}).
	\end{cases}
\]
\end{theorem}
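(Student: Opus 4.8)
The plan is to prove Theorem~\ref{thm2}, which is the marginal-exponent analogue of Theorem~\ref{thm1}. Let me think about how the proof of Theorem~\ref{thm1} would go, and then see where the marginal case breaks down and how to repair it.

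The strategy of Theorem~\ref{thm1}: start from the generalized energy inequality (iv), choose a cutoff test function $\Phi$ localizing in space (and approximating indicator in time), and try to pass to the limit. The point of the assumption (iv) is that it gives
$$2\int_0^T\int |\nabla v|^2\Phi \le \int_0^T\int \left[|v|^2(\Phi_t+\Delta\Phi)+(|v|^2+2p)v\cdot\nabla\Phi\right].$$
The terms $\Delta\Phi$ and $v\cdot\nabla\Phi$ involve derivatives of the spatial cutoff, which are supported on an annulus of scale $|x|\sim R$. One estimates those error terms and shows they vanish as $R\to\infty$. The pressure term is controlled via the growth assumption (ass_p), and the convective/kinetic terms $|v|^2$, $|v|^2 v$ are controlled via the Lorentz-space integrability assumptions (rq1)--(rq2). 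The precise exponents in (rq1)--(rq2) are exactly the scaling thresholds that make the annular error terms decay; the \emph{strict} inequalities there (the excluded endpoints) guarantee genuine decay rather than mere boundedness.

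The proof outline I would follow.

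\medskip
\noindent\textbf{Step 1 (Localized energy inequality).}
I would take the test function $\Phi(x,t)=\phi_R(x)\psi_{s,t}(\tau)$, where $\phi_R(x)=\phi(x/R)$ is a standard spatial cutoff equal to $1$ on $B_R$ and supported in $B_{2R}$, and $\psi_{s,t}$ is a smooth approximation of the indicator $\chi_{(0,t)}$ in time (as in the usual derivation of the energy inequality from its distributional form). Inserting this into \eqref{gene_en_ineq} and letting $\psi_{s,t}\to\chi_{(0,t)}$ yields, using condition (ii) of Definition~\ref{def1} for the weak continuity and strong continuity at $t=0$,
\begin{equation*}
\int_{\re^n}|v(t)|^2\phi_R\,dx+2\int_0^t\!\!\int_{\re^n}|\nabla v|^2\phi_R
\le \int_{\re^n}|v_0|^2\phi_R\,dx
+\int_0^t\!\!\int_{\re^n}\left[|v|^2\Delta\phi_R+(|v|^2+2p)v\cdot\nabla\phi_R\right].
\end{equation*}
This is the analogue of the inequality obtained in Theorem~\ref{thm1}; it holds for every fixed $R$.

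\medskip
\noindent\textbf{Step 2 (Estimating the annular error terms).}
Write the error terms as $I_R+II_R+III_R$ corresponding to $|v|^2\Delta\phi_R$, $|v|^2 v\cdot\nabla\phi_R$, and $2pv\cdot\nabla\phi_R$. Since $\nabla\phi_R$ and $\Delta\phi_R$ are supported in the annulus $A_R:=B_{2R}\setminus B_R$ with $|\nabla\phi_R|\lesssim R^{-1}$ and $|\Delta\phi_R|\lesssim R^{-2}$, I would estimate each by H\"older's inequality in the Lorentz spaces $L^{q,r}$, splitting time and space and using the assumed integrability $v\in L^3(0,t;L^{q_1,r_1})\cap L^2(0,t;L^{q_2,r_2})$, together with $\nabla v\in L^2_{loc}$ for the diffusion comparison. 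The pressure term $III_R$ is handled by the growth condition \eqref{ass_p} exactly as in Theorem~\ref{thm1}. \emph{Here is the crucial difference with Theorem~\ref{thm1}:} at the marginal exponents of Case~1--3, the corresponding annular contribution no longer tends to $0$ as $R\to\infty$ but instead stays \emph{bounded} by a multiple of the scale-invariant norm $V_v(t)$. Concretely, in Case~1 the term $II_R$ (the cubic kinetic flux) is scale-critical and contributes $O(\|v\|_{L^3(0,t;L^{q_1,r_1})}^3)$; in Case~2 the diffusion-comparison/kinetic term tied to $(q_2,r_2)$ contributes $O(\|v\|_{L^2(0,t;L^{q_2,r_2})}^2)$; and in Case~3 both appear.

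\medskip
\noindent\textbf{Step 3 (Passing to the limit $R\to\infty$).}
Letting $R\to\infty$ and using monotone convergence on the good left-hand side terms $\int|v(t)|^2\phi_R$ and $\int_0^t\int|\nabla v|^2\phi_R$ (which increase to the full-space integrals), together with $\|v_0\|_{L^2}^2<\infty$ for the initial term, I obtain
$$\|v(t)\|_{L^2}^2+2\int_0^t\|\nabla v(\tau)\|_{L^2}^2\,d\tau\le\|v_0\|_{L^2}^2+C_0 V_v(t),$$
which is exactly \eqref{eneq}. The finiteness of the right-hand side forces $v\in L^\infty(0,T;L^2_\sigma)\cap L^2(0,T;\dot H^1_\sigma)$.

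\medskip
The main obstacle I anticipate is Step~2 at the marginal exponents. In the non-marginal case of Theorem~\ref{thm1} one has a strictly subcritical power of $R$ multiplying each annular term, so a crude H\"older bound suffices to send it to zero. At the endpoints, the power of $R$ is exactly zero (scale invariance), so one cannot gain smallness from the cutoff scale alone. The delicate point is therefore to verify that the marginal H\"older estimate closes in the \emph{Lorentz} norms with the sharp exponents $\frac{3n}{n-1}$ and $\frac{2n}{n-2}$ (and their $\infty$-second-index partners) --- i.e.\ checking that the exponent arithmetic $\frac{1}{q}$-balance and the time integrability $L^3$ or $L^2$ line up precisely so that the annular integral is dominated by $V_v(t)$ uniformly in $R$. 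This requires the Lorentz-space H\"older inequality (rather than plain Lebesgue H\"older) to absorb the endpoint second indices $r_1,r_2$, and it is where the excluded corner cases $(q,r)=(\text{critical},\infty)$ vs. the allowed ones enter; the bound is uniform in $R$ but does not vanish, which is precisely why the extra term $C_0V_v(t)$ appears in \eqref{eneq} rather than being absorbed.
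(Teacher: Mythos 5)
Your overall architecture coincides with the paper's: localize the generalized energy inequality \eqref{gene_en_ineq} with $\Phi=\phi_R(x)\eta_\varepsilon(\tau)$, use condition (ii) of Definition \ref{def1} to recover $\int|v(t)|^2\phi_R\,dx$ and $\int|v_0|^2\phi_R\,dx$, estimate the annular terms by H\"older's inequality in Lorentz spaces with the scaling exponents, and observe that at the marginal exponents the critical terms stay bounded by the scale-invariant norm $V_v(t)$ instead of vanishing; Steps 1 and 3 and the kinetic part of Step 2 are exactly what the paper does. However, there is a genuine gap in your treatment of the pressure flux $III_R=2\int_0^t\!\int p\,v\cdot\nabla\phi_R\,dx\,d\tau$. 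You claim it is ``handled by the growth condition \eqref{ass_p} exactly as in Theorem \ref{thm1}'', but \eqref{ass_p} by itself gives no control of this integral at all: it is a purely qualitative sublinear bound on ball averages of $p$, and a pressure growing like $o(|x|)$ on the annulus $|x|\sim R$ makes the integrand there of size $o(R)\cdot|v|\cdot R^{-1}$ over a region of volume $\sim R^n$, which need not even be bounded in $R$ without further structure.

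The missing mechanism, which occupies the paper's Lemmas \ref{lem_p} and \ref{lem_p_prime}, is this: from the distributional equation one derives $-\Delta p=\sum_{i,j}\partial_{x_i}\partial_{x_j}(v_iv_j)$ and sets $p':=\sum_{i,j}R_iR_j(v_iv_j)$, which satisfies $\|p'(t)\|_{L^{q/2,r/2}}\le C\|v(t)\|_{L^{q,r}}^2$ by boundedness of the Riesz transforms on Lorentz spaces (Marcinkiewicz interpolation). The residue $\bar p:=p-p'$ is then harmonic in $x$ (Weyl's lemma), and it is only at this point that \eqref{ass_p} is used: combined with the mean value property and the Lorentz bound on $p'$, it shows $\bar p(x,t)=o(|x|)$, so the Liouville theorem for harmonic functions forces $\bar p$ to depend on $t$ alone; finally $\int\bar p(t)\,v\cdot\nabla\psi\,dx=0$ because $\diver v=0$ (justified by mollifying $\bar p$ in time, using $\bar p\in L^{3/2}_{loc}([0,T))$). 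Only after this reduction does the pressure term admit the annular H\"older estimate of your Step 2. A second, related omission: in Case 1 the pressure term is itself scale-critical, since $\|p'\|_{L^{q_1/2,\infty}}\le C\|v\|_{L^{q_1,\infty}}^2$ yields $III_R\le C\|v\|_{L^3(0,t;L^{3n/(n-1),\infty})}^3$ uniformly in $R$ but not vanishing, so it contributes to $V_v(t)$ alongside the cubic kinetic flux; your plan attributes the Case 1 contribution to $II_R$ alone.
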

\par
\bigskip
Similarly to Corollary \ref{cor1.2}, we have also the following Liouville-type theorem: 
\begin{corollary}\label{cor1.4} 
Let $n\ge 2$, and let $v_0 \equiv 0$ in $\re^n$.  
Suppose that the pair 
$(v,p)$ is a generalized suitable weak solution of (\ref{ns}).
We assume that
$p$ satisfies \eqref{ass_p} and that 
$v\in L^3(0, T; L^{q_1, r_1}(\re^n)) \cap  L^2(0, T; L^{q_2, r_2}(\re^n))$ 
for such $(q_1, r_1)$ and $(q_2, r_2)$ as in the Cases 1, 2 and 3 in Theorem \ref{thm2}.  
If there exists $\delta \in (0,1/C_0)$
such that
\[
V_v(t_0) 
\le
\delta \left( \| v (t_0) \|_{L^2}^2 + 2\int_0^{t_0} \| \nabla v (\tau ) \|_{L^2}^2 d\tau \right)
\]
for some $t_0 \in (0, T)$, then it holds that
$v(x,t)\equiv0$ on $\re^n \times [0,t_0]$.
\end{corollary}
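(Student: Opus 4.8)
The plan is to feed the energy inequality of Theorem \ref{thm2} into the smallness hypothesis and run an absorption argument; the whole statement is essentially an immediate consequence of that theorem. First I would specialize Theorem \ref{thm2} to the present situation. Since $v_0 \equiv 0$ we have $\|v_0\|_{L^2} = 0$, and by assumption $(v,p)$ satisfies all the hypotheses of that theorem (the growth condition \eqref{ass_p} on $p$ and the membership of $v$ in the two Lorentz-space classes in one of the Cases 1--3), so \eqref{eneq} yields, at the given $t_0$,
\[
\|v(t_0)\|_{L^2}^2 + 2\int_0^{t_0}\|\nabla v(\tau)\|_{L^2}^2\,d\tau \le C_0 V_v(t_0).
\]
Writing $E := \|v(t_0)\|_{L^2}^2 + 2\int_0^{t_0}\|\nabla v(\tau)\|_{L^2}^2\,d\tau \ge 0$ for the left-hand side, this reads $E \le C_0 V_v(t_0)$, whereas the hypothesis of the corollary is precisely $V_v(t_0) \le \delta E$.

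Chaining the two estimates gives $E \le C_0 V_v(t_0) \le C_0\delta E$, that is $(1 - C_0\delta)E \le 0$. Since $\delta \in (0, 1/C_0)$ we have $1 - C_0\delta > 0$, and therefore $E = 0$. This forces simultaneously $\|v(t_0)\|_{L^2} = 0$ and $\int_0^{t_0}\|\nabla v(\tau)\|_{L^2}^2\,d\tau = 0$, so that $\nabla v(\tau) = 0$ for almost every $\tau \in (0, t_0)$.

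The remaining step is to upgrade ``$\nabla v = 0$ a.e.'' to ``$v = 0$''. By the conclusion of Theorem \ref{thm2} we have $v \in L^{\infty}(0,T; L^2_{\sigma}(\re^n))$, so $v(\tau) \in L^2(\re^n)$ for almost every $\tau$; a field on $\re^n$ with vanishing distributional gradient is constant, and the only constant lying in $L^2(\re^n)$ is $0$, whence $v(\tau) = 0$ for almost every $\tau \in (0, t_0)$. Finally, to pass from almost every $\tau$ to every $t \in [0, t_0]$ I would invoke the weak continuity in $L^2(K)$ built into Definition \ref{def1}(ii): for each compact $K \subset \re^n$ and each $\varphi \in L^2(K)$ the map $t \mapsto \int_K v(x,t)\cdot\varphi(x)\,dx$ is continuous, so choosing $\tau_n \to t$ with $v(\tau_n) = 0$ gives $\int_K v(x,t)\cdot\varphi(x)\,dx = 0$ for all such $\varphi$ and $K$, i.e.\ $v(t) \equiv 0$. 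Thus $v \equiv 0$ on $\re^n \times [0, t_0]$, as claimed.

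I do not anticipate any serious obstacle: the corollary is a direct absorption consequence of Theorem \ref{thm2}, and the sign condition $\delta < 1/C_0$ is exactly what is needed to move the $V_v(t_0)$ term to the left-hand side and conclude $E = 0$. The only points requiring a word of care are the elementary but non-vacuous implication that a square-integrable field on the \emph{whole} space with zero gradient must vanish, and the weak-continuity argument used to include the endpoints $t = 0$ and $t = t_0$.
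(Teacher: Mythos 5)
Your proposal is correct and follows essentially the same route as the paper: apply the energy inequality \eqref{eneq} of Theorem \ref{thm2} with $\|v_0\|_{L^2}=0$, chain it with the smallness hypothesis to get $(1-C_0\delta)E\le 0$, conclude $\nabla v\equiv 0$ on $\re^n\times(0,t_0)$, and then rule out nonzero constants. The only (harmless) differences are that the paper eliminates the constant by using $v\in L^3(0,t_0;L^{q_1,r_1}(\re^n))$ instead of your $v\in L^{\infty}(0,T;L^2_{\sigma}(\re^n))$, and that your explicit weak-continuity argument pinning down $v(t)=0$ for every $t\in[0,t_0]$, endpoints included, is in fact more careful than the paper's proof, which stops at $v\equiv 0$ on $\re^n\times(0,t_0)$.
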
       
%%%%%%%%%%%%%%%%%%%%%%%%%%%%%%%%%%%%%%%%%%%%%%%%%

%%%%%%%%%%%%%%%%%%%%%%%%%%%%%%%%%%%%%%%%%%%%%%%%%%%
\begin{remark}\label{rem:1.3}
{\rm (i)}  
The estimate \eqref{eneq} is invariant under the scaling transformation
$v_{\lambda}(x,t) = \lambda v(\lambda x, \lambda^2 t)$ 
with $\lambda >0$.
Indeed, if $v$ satisfies the estimate (\ref{eneq}) for some $t \in (0, T)$, 
then it holds that 
$$
	\|v_\lambda(t/\lambda^2)\|_{L^2}^2
	+ 2 \int_0^{t/\lambda^2}\|\nabla v_\lambda(\tau)\|_{L^2}^2 d\tau
	\le \|v_{0, \lambda}\|_{L^2}^2 + C_0V_{v_{\lambda}}(t/\lambda^2) 
$$ 
for all $\lambda > 0$.
\par
{\rm (ii)} In comparison with the result of Taniuchi \cite{Ta97},
even for the energy inequality,
Theorem \ref{thm1} requires
stronger integrability of $v$ at the spatial infinity.  
On the other hand, we do not need to impose on $v$ the finite energy and dissipation like  
\begin{equation}\label{energy}
v\in L^{\infty}(0,T;L^2_{\sigma}(\re^n)) \cap L^2_{loc}([0,T);H^1_{\sigma}(\re^n)),
\end{equation}
while \cite{Ta97} requires such a property as (\ref{energy}).   
\end{remark}
%%%%%%%%%%%%%%%%%%%%%%%%%%%%%%%%%%%%%%%%%%%%%%%%%%%
%
\par
\bigskip
Next, we consider the initial-boundary value problem for the
Navier-Stokes equations in a two-dimensional domain
$\Omega \subset \re^2$:
\begin{align}
\label{ns2}
	\left\{ \begin{array}{ll}
	v_t - \Delta v + (v\cdot\nabla) v + \nabla p = 0,
		&(x,t)\in \Omega \times (0,T),\\
	\diver v = 0,
		&(x,t)\in \Omega \times (0,T),\\
	v(x,t) = 0,&x\in \partial \Omega,\\
	v(x,0) = v_0(x),&x\in \Omega.
	\end{array}\right.
\end{align}
Here,
$\Omega$
is assumed to be 
$\re^2$,
an exterior domain with smooth boundary
or perturbed half-space with smooth boundary.

For the half-plane $\re^2_+$,
Giga \cite{Gi13} considered an ancient solution
$v$
on
$\re^2_+ \times (-\infty,0)$
and proved
the Liouville type theorem $\omega \equiv 0$
if $v, \nabla v$ are bounded in $\re^2_+ \times (-\infty, 0)$, 
if $\omega(x,t) = \rot v(x,t) \ge 0$ in $\re^2_+ \times (-\infty, t_0)$
with some $t_0 < \infty$,
and if $v^1(t_0) = v^1(x_1, x_2, t_0)$ fulfills the decay condition
\begin{align*}%
	\lim_{R\to \infty} \sup\left\{
		| v^1(x_1, x_2, t_0) | ; x_1 \in \re , x_2 \ge R \right\} = 0.
\end{align*}%
Giga-Hsu-Maekawa \cite{GiHsMa14}
also obtained the same result
under assumptions such that  
$\sup_{-\infty < t <0} (-t)^{1/2} \| v(t) \|_{L^{\infty}} < \infty$
and such that 
$\omega (x,t) \ge 0$,
while there is restriction of spatial decay neither for $\omega$ nor for $v$.
Their proof is based on the Biot-Savart law and hence,
it seems difficult to apply to the case of general domains. 
Another Liouville-type theorem for the ancient solutions in the 2D half plane has been investigated by 
Seregin \cite{Se}. See also Jia-Seregin-Sver\'ak \cite{JiSeSv} and 
Koch-Nadirashvili-Seregin-Sver\'ak \cite{KoNaSeSv}.  
\par
\bigskip
Our result on global integrability of the vorticity now reads as follows.  
%%%%%%%%%%%%%%%%%%%%%%%%%%%%%%%%%%%%%%%%%%%%%%%%%%
\begin{theorem}\label{thm3} 
Let $\omega_0 = \partial_1v_{0}^2-\partial_2 v_{0}^1 \in L^2(\Omega)$, 
and let
$v \in C^{\infty}(\bar{\Omega} \times [0,T))$
be a bounded solution of \eqref{ns2}.
Assume that
$\omega = \partial_1v^2-\partial_2 v^1$
satisfies
\begin{align}
\label{vol}
	\varepsilon_{\omega} (t) := \lim_{R\to \infty} \left(
		\sup_{(x,\tau) \in (\Omega \cap \{ |x| \ge R\} ) \times (0,t)}
		|x|^{1/2} | \omega (x,\tau) | \right) < \infty
	%\limsup_{|x|\to \infty} |x|^{1/2}|\omega(x,t)| <+\infty
\end{align}
for $t \in (0,T)$, and assume also that  
\begin{align}
\label{pa_om}
	\frac{\partial \omega}{\partial \nu} \omega \in L^1(\partial\Omega \times (0,t))
	\quad \mbox{and}\quad
	\int_0^t
		\int_{\partial \Omega}
		\frac{\partial \omega}{\partial \nu} (x,\tau)
		\omega (x, \tau)
		dS d\tau \le 0
\end{align}
for the same $t \in (0,T)$ as in (\ref{vol}),
where
$\displaystyle{\frac{\partial \omega}{\partial \nu}}$
denotes the normal derivative of
$\omega$.
Then, it holds that 
\begin{equation}\label{vortex}
	\omega \in L^\infty(0, t; L^2(\Omega)),
	\quad \nabla \omega \in L^2(0, t; L^2(\Omega))
\end{equation}
with the estimate 
\begin{align}
\label{eneq_om}
	\| \omega(t) \|_{L^2(\Omega)}^2
	+ 2 \int_0^t \| \nabla \omega(\tau) \|_{L^2(\Omega)}^2 d\tau
		\le \| \omega (0) \|_{L^2(\Omega)}^2
	+ C_0 t \| v \|_{L^{\infty}(\Omega \times (0,t))}
		\varepsilon_{\omega}(t) ^2, 
\end{align}
where $C_0>0$ is an absolute constant.  
\end{theorem}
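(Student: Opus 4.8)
The plan is to work not with the velocity but with the scalar vorticity, which in two dimensions satisfies the convection–diffusion equation
\[
	\partial_t \omega - \Delta \omega + (v\cdot\nabla)\omega = 0
	\quad\mbox{in } \Omega\times(0,T).
\]
This follows by taking the curl of \eqref{ns2}: the pressure drops out, the vortex–stretching term is absent in 2D, and a direct computation using $\diver v = 0$ shows that $\partial_1[(v\cdot\nabla)v^2] - \partial_2[(v\cdot\nabla)v^1] = (v\cdot\nabla)\omega$ (the remaining quadratic terms collect into $\omega\,\diver v = 0$). Since $\Omega$ is unbounded and hypothesis \eqref{vol} only forces the decay $|\omega(x,\tau)| \lesssim |x|^{-1/2}$, which is \emph{not} square integrable at infinity in $\re^2$, I cannot simply multiply by $\omega$ and integrate over $\Omega$. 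Instead I would fix a cut-off $\phi_R \in C_0^\infty(\re^2)$ with $\phi_R \equiv 1$ on $B_R$, $\phi_R \equiv 0$ outside $B_{2R}$ and $|\nabla \phi_R| \lesssim R^{-1}$, chosen (for $R$ large, using that $\partial\Omega$ is bounded) so that $\phi_R \equiv 1$ in a neighborhood of $\partial\Omega$, and test the vorticity equation against $\omega \phi_R^2$.

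First I would integrate by parts on $\Omega\times(0,t)$. The Laplacian produces the good dissipation $\int_0^t\!\int_\Omega \phi_R^2 |\nabla\omega|^2$, a commutator $2\int_0^t\!\int_\Omega \phi_R\,\omega\,(\nabla\phi_R\cdot\nabla\omega)$, and a boundary term $\int_0^t\!\int_{\partial\Omega}\frac{\partial\omega}{\partial\nu}\omega\phi_R^2\,dSd\tau$. The convection term, written as $(v\cdot\nabla)\omega\,\omega = \tfrac12 v\cdot\nabla(\omega^2)$ and integrated by parts using $\diver v = 0$ together with $v|_{\partial\Omega}=0$ (so the resulting boundary integral vanishes), reduces to $\int_0^t\!\int_\Omega \omega^2 \phi_R\,(v\cdot\nabla\phi_R)$. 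This yields the identity
\begin{align*}
	\frac12\int_\Omega \omega(t)^2 \phi_R^2\,dx
	+ \int_0^t\!\!\int_\Omega \phi_R^2 |\nabla\omega|^2\,dxd\tau
	&= \frac12\int_\Omega \omega(0)^2 \phi_R^2\,dx
	+ \int_0^t\!\!\int_{\partial\Omega}\frac{\partial\omega}{\partial\nu}\omega\phi_R^2\,dSd\tau \\
	&\quad - 2\int_0^t\!\!\int_\Omega \phi_R\,\omega\,(\nabla\phi_R\cdot\nabla\omega)\,dxd\tau
	+ \int_0^t\!\!\int_\Omega \omega^2 \phi_R\,(v\cdot\nabla\phi_R)\,dxd\tau .
\end{align*}
Since $\phi_R \equiv 1$ on $\partial\Omega$, the boundary integral equals $\int_0^t\!\int_{\partial\Omega}\frac{\partial\omega}{\partial\nu}\omega\,dSd\tau \le 0$ by \eqref{pa_om} and may be dropped to pass to an inequality.

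The two remaining error terms are supported in the annulus $A_R = \{R \le |x| \le 2R\}$, on which \eqref{vol} gives $|\omega(x,\tau)| \le (\varepsilon_\omega(t)+o(1))|x|^{-1/2}$, whence $\int_{A_R}\omega^2\,dx \le (\varepsilon_\omega(t)+o(1))^2 \int_{A_R}\frac{dx}{|x|} \lesssim R\,\varepsilon_\omega(t)^2$. For the commutator I would use Young's inequality $2|\phi_R\,\omega\,(\nabla\phi_R\cdot\nabla\omega)| \le \tfrac12\phi_R^2|\nabla\omega|^2 + 2|\nabla\phi_R|^2\omega^2$; the first piece is absorbed into the dissipation on the left, while $\int_0^t\!\int_{A_R}|\nabla\phi_R|^2\omega^2 \lesssim R^{-2}\cdot t R\,\varepsilon_\omega^2 = tR^{-1}\varepsilon_\omega^2 \to 0$. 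This is exactly where the exponent $1/2$ in \eqref{vol} is indispensable. The convection error, by contrast, is bounded by $\|v\|_{L^\infty}R^{-1}\int_0^t\!\int_{A_R}\omega^2 \lesssim t\,\|v\|_{L^\infty}\varepsilon_\omega^2$ and does \emph{not} vanish; tracking the constant gives precisely the contribution $C_0\,t\,\|v\|_{L^\infty(\Omega\times(0,t))}\varepsilon_\omega(t)^2$. Letting $R\to\infty$ and applying monotone convergence on the left (which simultaneously establishes the finiteness asserted in \eqref{vortex}) then yields \eqref{eneq_om}, and the bound $\omega \in L^\infty(0,t;L^2(\Omega))$ follows from the uniform control of $\int_\Omega \omega(s)^2\phi_R^2$ in $s$.

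I expect the main obstacle to be the convection error term $\int_0^t\!\int_\Omega \omega^2 \phi_R\,(v\cdot\nabla\phi_R)$: unlike the diffusive commutator it does not disappear as $R\to\infty$ but saturates at a finite value, and extracting the sharp constant $C_0\,t\,\|v\|_{L^\infty}\varepsilon_\omega^2$ requires estimating it against the exact weighted measure of $A_R$ supplied by \eqref{vol} rather than crudely. Some care is also needed to justify that the boundary integral arising from the cut-off is genuinely the one appearing in \eqref{pa_om} (i.e.\ that $\phi_R \equiv 1$ on $\partial\Omega$ removes any spurious boundary contribution), and in the final limit, where the very finiteness of $\|\omega(t)\|_{L^2(\Omega)}$ is part of the conclusion and must be read off from the uniform-in-$R$ bound rather than assumed at the outset.
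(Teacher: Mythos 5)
Your overall strategy is exactly the paper's: pass to the scalar vorticity equation, test against $\omega$ times a cut-off, use the decay \eqref{vol} on the annulus $\{R\le|x|\le2R\}$ to show the diffusive commutator error vanishes like $tR^{-1}\varepsilon_{\omega}^2$ while the convection error saturates at $C_0t\|v\|_{L^\infty}\varepsilon_{\omega}^2$, and use \eqref{pa_om} to dispose of the boundary term. However, there are two concrete defects in your execution.

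First, your handling of the boundary term rests on the claim that $\partial\Omega$ is bounded, so that $\phi_R\equiv1$ in a neighborhood of $\partial\Omega$ for large $R$. That is false in one of the three cases the theorem covers: $\Omega$ may be a \emph{perturbed half-space}, whose boundary is unbounded, and then no compactly supported cut-off can equal $1$ near all of $\partial\Omega$. In that case your boundary integral is $\int_0^t\int_{\partial\Omega}\frac{\partial\omega}{\partial\nu}\,\omega\,\phi_R^2\,dS\,d\tau$, which at finite $R$ has no definite sign and cannot simply be dropped. The correct procedure (and the reason the $L^1(\partial\Omega\times(0,t))$ hypothesis appears in \eqref{pa_om} at all) is to keep this term, let $R\to\infty$ using dominated convergence ($0\le\phi_R^2\le1$, $\phi_R^2\to1$ pointwise on $\partial\Omega$), and only then invoke the sign condition on the limiting integral. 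This is what the paper does; it also uses the product cut-off $\psi(x_1/R)\psi(x_2/R)$ rather than a radial one, so that the normal derivative of the cut-off vanishes on the flat part of the boundary, which is needed for its version of the commutator step in the half-space geometry.

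Second, your Young's inequality $2|\phi_R\,\omega\,(\nabla\phi_R\cdot\nabla\omega)|\le\tfrac12\phi_R^2|\nabla\omega|^2+2|\nabla\phi_R|^2\omega^2$ absorbs \emph{half} the dissipation, so after $R\to\infty$ you obtain the inequality with coefficient $1$ in front of $\int_0^t\|\nabla\omega(\tau)\|_{L^2}^2\,d\tau$, not the coefficient $2$ asserted in \eqref{eneq_om}. This is easily repaired: either use $\delta$-Young ($\delta\phi_R^2|\nabla\omega|^2+\delta^{-1}|\nabla\phi_R|^2\omega^2$), let $R\to\infty$ at fixed $\delta$ (the error $\delta^{-1}tR^{-1}\varepsilon_{\omega}^2$ still vanishes) and then send $\delta\to0$; or, as the paper does, integrate the commutator by parts once more, writing $2\int\phi_R\,\omega\,\nabla\phi_R\cdot\nabla\omega\,dx=-\tfrac12\int\omega^2\Delta(\phi_R^2)\,dx$ plus a boundary term, so that it is controlled by the annulus estimate alone and nothing need be absorbed into the dissipation. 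With these two corrections your argument matches the paper's proof.
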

As an application of Theorem \ref{thm3}, 
we have the following Liouville-type theorem in 2D unbounded domains. 
\begin{corollary}\label{cor1.7}  
Let $v_0 =0$, and let $v$ be a smooth bounded solution of (\ref{ns2}).   
If $\varepsilon_{\omega}(t) =0$ for some $t \in (0, T)$ and if the hypothesis (\ref{pa_om}) 
is satisfied for the same $t \in (0, T)$, then it holds that 
$$
v\equiv 0
\quad
\mbox{on $\Omega \times [0, t]$}.  
$$
\end{corollary}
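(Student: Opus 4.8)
The plan is to specialize Theorem~\ref{thm3} to the present hypotheses and then recover the velocity from its vorticity. Since we assume $\varepsilon_{\omega}(t)=0$ and condition \eqref{pa_om} for the given $t\in(0,T)$, the smooth bounded solution $v$ satisfies all the hypotheses of Theorem~\ref{thm3}, and the error term $C_0 t\,\|v\|_{L^{\infty}(\Omega\times(0,t))}\,\varepsilon_{\omega}(t)^2$ on the right-hand side of \eqref{eneq_om} vanishes. Hence the estimate collapses to
\begin{equation*}
	\|\omega(t)\|_{L^2(\Omega)}^2+2\int_0^t\|\nabla\omega(\tau)\|_{L^2(\Omega)}^2\,d\tau\le\|\omega(0)\|_{L^2(\Omega)}^2 .
\end{equation*}

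First I would identify $\omega(0)$ with the prescribed initial vorticity. Because $v\in C^{\infty}(\bar{\Omega}\times[0,T))$ with $v(\cdot,0)=v_0\equiv0$, we get $\omega(x,0)=\partial_1 v^2(x,0)-\partial_2 v^1(x,0)=\partial_1 v_0^2-\partial_2 v_0^1=\omega_0\equiv0$, so $\|\omega(0)\|_{L^2(\Omega)}=0$. The displayed inequality then forces simultaneously $\|\omega(t)\|_{L^2(\Omega)}=0$ and $\int_0^t\|\nabla\omega(\tau)\|_{L^2(\Omega)}^2\,d\tau=0$; combined with the smoothness of $\omega$ on $\bar{\Omega}\times[0,t]$ this gives $\omega\equiv0$ on $\Omega\times[0,t]$.

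The remaining step is to pass from $\omega\equiv0$ back to $v\equiv0$. For each fixed $\tau\in[0,t]$ the field $v(\cdot,\tau)$ is divergence free with vanishing vorticity, hence each component is harmonic. I would use the two-dimensional pointwise identity $|\nabla v|^2=(\diver v)^2+\omega^2-2\det(\nabla v)$ together with the fact that $\det(\nabla v)=\partial_1(v^1\partial_2 v^2)-\partial_2(v^1\partial_1 v^2)$ is a pure divergence, i.e. the divergence of the field $(v^1\partial_2 v^2,\,-v^1\partial_1 v^2)$. Integrating over $\Omega$ and using the no-slip condition $v|_{\partial\Omega}=0$ together with the decay at infinity to discard the boundary contributions yields $\int_{\Omega}|\nabla v(\cdot,\tau)|^2\,dx=\int_{\Omega}\omega(\cdot,\tau)^2\,dx=0$. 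Therefore $\nabla v(\cdot,\tau)\equiv0$, so $v(\cdot,\tau)$ is constant on each (connected) component of $\Omega$; for the exterior domain and the perturbed half-space the boundary value $v|_{\partial\Omega}=0$ forces this constant to be zero, and for $\Omega=\re^2$ one rules out a nonzero spatial constant using the boundedness of $v$ and the vorticity decay. Letting $\tau$ range over $[0,t]$ gives $v\equiv0$ on $\Omega\times[0,t]$.

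I expect the main obstacle to lie precisely in this reconstruction of $v$ from $\omega$ on \emph{unbounded} domains: one must justify that the divergence-form quantity $\det(\nabla v)$ integrates to zero, controlling the flux of $(v^1\partial_2 v^2,\,-v^1\partial_1 v^2)$ through large circles by means of the boundedness of $v$ and the quantitative decay $|x|^{1/2}|\omega|\to0$ coming from $\varepsilon_{\omega}(t)=0$. The whole-plane case is the most delicate, since there is no spatial boundary to absorb the constant and one must invoke the spatial behavior of $v$ (equivalently, the Biot--Savart representation, whose harmonic remainder must be excluded). Once these boundary-at-infinity terms are shown to vanish, the identity $\|\nabla v(\tau)\|_{L^2(\Omega)}=\|\omega(\tau)\|_{L^2(\Omega)}$ closes the argument uniformly in $\tau\in[0,t]$ and delivers the claimed Liouville property.
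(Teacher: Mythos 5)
Your first two steps reproduce the paper's own argument: with $\omega(0)=\rot v_0\equiv 0$ and $\varepsilon_{\omega}(t)=0$, the estimate \eqref{eneq_om} forces $\omega\equiv 0$ on $\Omega\times[0,t]$, and then each $v(\cdot,\tau)$ is divergence-free and curl-free, hence harmonic componentwise. The divergence from the paper, and the genuine gap, is in your last step, where you recover $v\equiv 0$ from harmonicity. You propose the identity $\int_{\Omega}|\nabla v|^2\,dx=\int_{\Omega}\omega^2\,dx$, obtained by writing $\det(\nabla v)=\diver\bigl(v^1\partial_2v^2,\,-v^1\partial_1v^2\bigr)$ and integrating by parts; but you leave unproven exactly the point on which this identity hinges, namely that the flux of $\bigl(v^1\partial_2v^2,\,-v^1\partial_1v^2\bigr)$ through large circles tends to zero. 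The tools you name cannot close this. The decay $|x|^{1/2}|\omega(x,\tau)|\to 0$ is vacuous at this stage (you already know $\omega\equiv 0$) and carries no information about $\nabla v$; and boundedness of $v$ together with interior gradient estimates for harmonic functions gives only $|\nabla v(x)|\le C\|v\|_{L^{\infty}}/\mathrm{dist}(x,\partial\Omega)$, so the flux through $\{|x|=R\}$ is merely $O(1)$, not $o(1)$. In an exterior domain one could repair this by the Laurent expansion of bounded harmonic functions at infinity, which upgrades the decay to $|\nabla v|=O(|x|^{-2})$; but for the perturbed half-space a portion of the circle $\{|x|=R\}$ remains at bounded distance from $\partial\Omega$, where no such decay is available, so your argument does not close there. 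The whole-plane case is still worse: a nonzero constant field is bounded, divergence-free and curl-free, so neither boundedness of $v$ nor vorticity decay can exclude it, contrary to what you assert (this degenerate case is in fact also glossed over by the paper, since there $\partial\Omega=\emptyset$).

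The paper's proof avoids the entire issue in one stroke: each component of $v(\cdot,\tau)$ is a bounded harmonic function on $\Omega$ vanishing on $\partial\Omega$, so the maximum principle in unbounded domains (Berestycki--Caffarelli--Nirenberg \cite[Lemma 2.1]{BeCaNi}) yields $v(\cdot,\tau)\equiv 0$ directly, with no flux computation and no case distinction between exterior domains and perturbed half-spaces. You should either replace your final step by this argument, or supply, for each admissible class of domains, the decay estimates on $\nabla v$ at spatial infinity that your integration by parts requires; as written, the key analytic difficulty of your route is identified but not resolved.
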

%%%%%%%%%%%%%%%%%%%%%%%%%%%%%%%%%%%%%%%%%%%%%%%%%%%

%%%%%%%%%%%%%%%%%%%%%%%%%%%%%%%%%%%%%%%%%%%%%%%%%%%
\begin{remark} 
{\rm(i)} 
When $\Omega = \re^2$, then the estimate \eqref{eneq_om} is invariant under the scaling transformation
$v_{\lambda}(x,t) = \lambda v(\lambda x, \lambda^2 t)$,
$\omega_{\lambda}(x,t) = \lambda^2 \omega(\lambda x, \lambda^2 t)$
with $\lambda >0$. Indeed, if $\omega$ satisfies (\ref{eneq_om}) with $\Omega = \re^2$ 
for some $t \in (0, T)$, then we have that 
\begin{align*}
	&\|\omega_\lambda(t/\lambda^2)\|_{L^2(\re^2)}^2
		+ 2\int_0^{t/\lambda^2}\|\nabla \omega_{\lambda}(\tau)\|_{L^2(\re^2)}^2 d\tau\\
	&\le \|\omega_\lambda(0)\|_{L^2(\re^2)}^2
	+ C_0 \frac{t}{\lambda^2} \|v_{\lambda}\|_{L^{\infty}(\re^2\times(0, t/\lambda^2))} 
	\varepsilon_{\omega_{\lambda}}(t/\lambda^2)^2
\end{align*}
for all $\lambda > 0$. 
\par
{\rm(ii)} Corollary \ref{cor1.7} may be regarded as a Liouville-type theorem on bounded 
solutions to (\ref{ns2}) with an additional asymptotic behavior on vorticity
like \eqref{vol}.
A similar result to that on the stationary Navier-Stokes equations in $\re^3$ has been recently 
obtained by the authors \cite{KoTeWa}.  
See also Galdi \cite{Ga} and Chae \cite{Ch1}, \cite{Ch15}.          
\end{remark}
%%%%%%%%%%%%%%%%%%%%%%%%%%%%%%%%%%%%%%%%%%%%%%%%%%%

%%%%%%%%%%%%%%%%%%%%%%%%%%%%%%%%%%%%%%%%%%%%%%%%%%%
%%%%%%%%%%%%%%%%%%%%%%%%%%%%%%%%%%%%%%%%%%%%%%%%%%%
%%%%%%%%%%%%%%%%%%%%%%%%%%%%%%%%%%%%%%%%%%%%%%%%%%%

\section{Proof of Theorems \ref{thm1} and \ref{thm2}}
In this section, we give proofs of Theorems \ref{thm1} and \ref{thm2}.

We first introduce notations used throughout this paper.
In what follows, we shall denote by
$C$ various constants which may change from line to line.
In particular, we denote by $C = C(\ast, \ldots, \ast)$
constants depending only on the quantities appearing in parentheses.
Let $L^p(\re^n)$ be the usual Lebesgue space equipped with the norm
\begin{align*}%
	\| f \|_{L^p} := \left( \int_{\re^n} |f(x)|^p dx \right)^{1/p}\ (1\le p < \infty),\quad
	\| f \|_{L^{\infty}} := {\rm ess\,sup\,}_{x\in \re^n} |f(x)|. 
\end{align*}%
Also, $L^p_{\sigma}(\re^n)$ stands for
the space of solenoidal $L^p(\re^n)$-functions.
We denote by $\dot{H}^1(\re^n)$ the inhomogeneous Sobolev space
\begin{align*}%
	\dot{H}^1(\re^n) :=
	\left\{ f \in \mathcal{D}^{\prime}(\re^n) ;
		\nabla f \in L^2 (\re^n) \right\}
\end{align*}%
equipped with the seminorm $\| \nabla f \|_{L^2}$.
$\dot{H}^1_{\sigma}(\re^n)$ is defined in a similar manner as $L^p_{\sigma}(\re^n)$.
We also define $L^p(\Omega) \ (1\le p \le \infty)$
for an exterior domain $\Omega \subset \re^n$ in the same way.

Moreover, we prepare the definition and basic properties of
Lorentz spaces.
\subsection{Basic properties of Lorentz spaces}
Let $1 \le q < \infty$ and $1\le r \le \infty$.
For a measurable function $f$, we define the rearrangement $f^{\ast}$ by  
\[
	f^{\ast}(t) = \sup \left\{ s \in (0,\infty)
		; \mu\left( \{ x \in \re^n; |f(x)| > s \} \right) > t \right\},
\]
where $\mu$ is the Lebesgue measure on $\re^n$.
\begin{definition}
Let $1 \le q < \infty$ and $1\le r \le \infty$.
We define
$L^{q,r}(\re^n)$
by the set of all measurable functions satisfying
$\| f \|_{L^{q,r}} < + \infty$,
where
\[
	\| f \|_{L^{q,r}} = \begin{cases}
	\displaystyle \left( \frac{r}{q}\int_0^{\infty} t^{r/q} f^{\ast}(t)^r \frac{dt}{t} \right)^{1/r},
   & 1\le r <\infty,\\
	\displaystyle \sup_{t>0} t^{1/q}f^{\ast}(t)
	= \sup_{t>0} t \mu \left( \{x \in \re^n; |f(x)|>t\} \right)^{1/q}, &r=\infty.
	\end{cases}
\]
When $q=\infty$ and $1\le r\le \infty$,
we define $L^{q,r} (\mathbb{R}^n) := L^{\infty}(\mathbb{R}^n)$.
\end{definition}
It is well known that
$L^{q,q}(\re^n)$ coincides with the usual Lebesgue space $L^q(\re^n)$ and
the real interpolation yields the equivalence
$(L^{q_0}(\re^n), L^{q_1}(\re^n))_{\theta, r} = L^{q,r}(\re^n)$, 
where $1<q_0 < q < q_1 < \infty$ and $0<\theta<1$ satisfy
$1/q = (1-\theta)/q_0 + \theta/q_1$
and
$1\le r \le \infty$
(see for example, \cite[Theorem 5.3.1, p.113]{BeLobook}).
It is also well known that
$\| \cdot \|_{L^{q,r}}$ is a quasi-norm, namely,
$\| f + g \|_{L^{q,r}} \le C ( \| f\|_{L^{q,r}} + \| g \|_{L^{q,r}})$
holds for some $C>1$, instead of the usual triangle inequality.
We also use the following dilation property of the norm
$\|\cdot\|_{L^{q, r}}$:
%%%%%%%%%%%%%%%%%%%%%%%%%%%%%%%%%%%%%%%%%%%%%
\begin{lemma}\label{lem_di}
Let $1\le q<\infty$, $1\le r \le \infty$ and $f\in L^{q,r}(\mathbb{R}^n)$.
For a parameter $R>0$, we put $f_R(x) = f(x/R)$.
Then it holds that 
$\| f_R \|_{L^{q,r}} = R^{n/q}\|f\|_{L^{q,r}}$.
\end{lemma}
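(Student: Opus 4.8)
The plan is to prove the dilation identity $\|f_R\|_{L^{q,r}} = R^{n/q}\|f\|_{L^{q,r}}$ by tracking how dilation acts on the distribution function and, consequently, on the decreasing rearrangement. The key observation is that if $f_R(x) = f(x/R)$, then the super-level sets satisfy $\{x \in \re^n : |f_R(x)| > s\} = \{x : |f(x/R)| > s\} = R\cdot\{y : |f(y)| > s\}$, where $R\cdot A$ denotes the dilated set $\{Ry : y \in A\}$. Since the Lebesgue measure scales as $\mu(R\cdot A) = R^n \mu(A)$, the distribution function of $f_R$ is $d_{f_R}(s) := \mu(\{|f_R| > s\}) = R^n \mu(\{|f| > s\}) = R^n d_f(s)$ for every $s > 0$. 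This is the single computational fact driving the whole proof.

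First I would use this relation to compute the rearrangement $f_R^{\ast}$. From the definition $f_R^{\ast}(t) = \sup\{s > 0 : d_{f_R}(s) > t\}$ and the identity $d_{f_R}(s) = R^n d_f(s)$, the condition $d_{f_R}(s) > t$ is equivalent to $R^n d_f(s) > t$, i.e. $d_f(s) > t/R^n$. Taking the supremum over such $s$ yields the clean scaling relation
\[
	f_R^{\ast}(t) = f^{\ast}(t/R^n) \quad \mbox{for all } t > 0.
\]
This reduces the lemma to a one-dimensional change of variables in the integral (or supremum) defining the Lorentz norm.

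Next I would substitute $f_R^{\ast}(t) = f^{\ast}(t/R^n)$ into the norm. For the case $1 \le r < \infty$, setting $u = t/R^n$ (so $dt/t = du/u$ is dilation-invariant) gives
\[
	\|f_R\|_{L^{q,r}}^r
	= \frac{r}{q}\int_0^{\infty} t^{r/q} f^{\ast}(t/R^n)^r \frac{dt}{t}
	= \frac{r}{q}\int_0^{\infty} (R^n u)^{r/q} f^{\ast}(u)^r \frac{du}{u}
	= R^{nr/q}\|f\|_{L^{q,r}}^r,
\]
and taking $r$-th roots yields $\|f_R\|_{L^{q,r}} = R^{n/q}\|f\|_{L^{q,r}}$. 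For the case $r = \infty$, the same substitution in $\sup_{t>0} t^{1/q} f_R^{\ast}(t) = \sup_{t>0} t^{1/q} f^{\ast}(t/R^n) = \sup_{u>0}(R^n u)^{1/q} f^{\ast}(u) = R^{n/q}\|f\|_{L^{q,\infty}}$ completes the argument.

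I do not expect any genuine obstacle here, as the result is a routine but useful scaling identity. The only points requiring minor care are the measurability and the equality $\{x : |f(x/R)| > s\} = R\cdot\{y : |f(y)| > s\}$, which follows from the bijection $x \mapsto x/R$, together with the invariance of the measure $dt/t$ under the dilation $t \mapsto t/R^n$ that makes the change of variables exact rather than merely proportional. The degenerate case $q = \infty$ is excluded by the hypothesis $1 \le q < \infty$, so no separate treatment is needed.
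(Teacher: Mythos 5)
Your proof is correct, and it follows the standard route: dilation scales the distribution function by $R^n$, hence the rearrangement satisfies $f_R^{\ast}(t)=f^{\ast}(t/R^n)$ under the paper's definition, and the dilation-invariant measure $dt/t$ turns this into the exact factor $R^{n/q}$ in both the $r<\infty$ and $r=\infty$ cases. The paper itself gives no inline proof, deferring instead to \cite[Proposition 2.1]{KoTeWa}, and your argument is precisely the scaling computation that reference carries out, so there is nothing to add.
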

%%%%%%%%%%%%%%%%%%%%%%%%%%%%%%%%%%%%%%%%%%%%%
For the proof, see for example, \cite[Proposition  2.1]{KoTeWa}.

We will also use the following lemma. 
%%%%%%%%%%%%%%%%%%%%%%%%%%%%%%%%%%%%%%%%%%%%%
\begin{lemma}\label{lem_b}
Let $1\le q<\infty$, $1\le r < \infty$ and $f\in L^{q,r}(\mathbb{R}^n)$.
Then, we have
\[
	\lim_{R\to \infty} \| f \|_{L^{q,r}(|x| > R)} =0.
\]
\end{lemma}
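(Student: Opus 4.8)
The lemma asserts that for $f \in L^{q,r}(\mathbb{R}^n)$ with $1 \le q < \infty$ and $1 \le r < \infty$, the tail quantity $\|f\|_{L^{q,r}(|x|>R)}$ vanishes as $R \to \infty$. The plan is to prove this by reducing to the dominated-convergence structure already built into the definition of the Lorentz quasi-norm via the decreasing rearrangement. First I would set $f_R := f \cdot \mathbf{1}_{\{|x|>R\}}$, so that $\|f\|_{L^{q,r}(|x|>R)} = \|f_R\|_{L^{q,r}(\mathbb{R}^n)}$. The key pointwise observation is that $|f_R|$ decreases monotonically to $0$ everywhere as $R \to \infty$: for each fixed $x$, once $R > |x|$ we have $f_R(x) = 0$. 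Consequently the distribution functions satisfy $\mu(\{|f_R| > s\}) \downarrow 0$ as $R \to \infty$ for every $s > 0$, and by monotonicity of the rearrangement this forces the decreasing rearrangement $f_R^{\ast}(t)$ to decrease to $0$ for every $t > 0$.

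\emph{Carrying out the two cases.} For $1 \le r < \infty$, I would write
\[
	\|f_R\|_{L^{q,r}}^r = \frac{r}{q}\int_0^\infty t^{r/q} f_R^{\ast}(t)^r \, \frac{dt}{t},
\]
and apply the dominated convergence theorem. The integrand converges pointwise to $0$ since $f_R^{\ast}(t) \downarrow 0$, and it is dominated uniformly in $R$ by $t^{r/q} f^{\ast}(t)^r / t$, because $|f_R| \le |f|$ pointwise implies $f_R^{\ast}(t) \le f^{\ast}(t)$. This dominating function is integrable precisely because $f \in L^{q,r}$, i.e.\ $\|f\|_{L^{q,r}} < \infty$. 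Hence the integral tends to $0$, which is the claim. Note the hypothesis $r < \infty$ is used here in an essential way: it is what gives us a genuine integral against which to run dominated convergence, and it is also why the case $r = \infty$ is (correctly) excluded from the statement.

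\emph{Main obstacle and its resolution.} The one genuinely delicate point is justifying that $f_R^{\ast}(t) \to 0$ for fixed $t > 0$, rather than merely being monotone. Monotonicity of $R \mapsto f_R^{\ast}(t)$ is immediate from $|f_{R'}| \le |f_R|$ for $R' > R$; the content is that the limit is zero. This is where I would argue carefully: since $f^{\ast}(t) < \infty$ for $t>0$ (finiteness of the rearrangement of an $L^{q,r}$ function away from $t=0$), the set $\{|f| > f^{\ast}(t)/2\}$ may have large measure, but the functions $|f_R|$ vanish on the fixed ball $B_R(0)$, so the measure of $\{|f_R| > s\}$ is bounded by $\mu(\{|f| > s\} \cap \{|x| > R\})$, which tends to $0$ as $R \to \infty$ for each fixed $s>0$ whenever $\mu(\{|f|>s\}) < \infty$; the latter finiteness holds for $s > 0$ because $f \in L^{q,r} \subset L^{q,\infty}$ forces $\mu(\{|f|>s\}) \le (\|f\|_{L^{q,\infty}}/s)^q < \infty$. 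Feeding this into the definition of $f_R^\ast$ yields $f_R^{\ast}(t) = 0$ for all sufficiently large $R$ at each fixed $t>0$, which is even stronger than needed and closes the argument. The remainder is routine bookkeeping with the rearrangement.
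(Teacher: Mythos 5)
The paper states this lemma without any proof or citation, so there is no argument of the authors to compare against; judged on its own, your dominated-convergence strategy is the natural one and its skeleton is correct. Setting $f_R = f\chi_{\{|x|>R\}}$, the identity $\|f\|_{L^{q,r}(|x|>R)} = \|f_R\|_{L^{q,r}(\re^n)}$, the domination $f_R^\ast \le f^\ast$ (so that $t^{r/q-1}f^\ast(t)^r$ is an integrable majorant precisely because $f\in L^{q,r}$), and the reduction of the whole problem to the pointwise convergence $f_R^\ast(t)\to 0$ for each $t>0$ are all sound, and the hypotheses $q<\infty$, $r<\infty$ enter exactly where you say they do.

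The one genuinely incorrect point is the last claim of your final paragraph: from $\mu(\{|f_R|>s\})\to 0$ for each fixed $s>0$ it does \emph{not} follow that $f_R^\ast(t)=0$ for all sufficiently large $R$. That conclusion would require $\mu(\{|f_R|>s\})\le t$ uniformly in $s>0$, and the convergence is not uniform as $s\downarrow 0$: if $f>0$ a.e.\ (e.g.\ $f(x)=e^{-|x|}$, which lies in every $L^{q,r}$), then $f_R$ is strictly positive on a set of infinite measure, so $\mu(\{|f_R|>s\})>t$ for all sufficiently small $s$, and hence $f_R^\ast(t)>0$ for every $R$ and every $t$ (in dimension one, $f_R^\ast(t)=e^{-R-t/2}$). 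Fortunately, the statement you actually need is only the limit, and it does follow from what you established: given $t>0$ and $s>0$, choose $R_0$ so that $\mu(\{|f_R|>s\})\le t$ for $R\ge R_0$; since the distribution function is non-increasing in the level, $\mu(\{|f_R|>s'\})\le t$ for all $s'\ge s$, and therefore, by the definition of the rearrangement, $f_R^\ast(t)\le s$ for $R\ge R_0$. As $s>0$ was arbitrary, $f_R^\ast(t)\to 0$ as $R\to\infty$, which is exactly the pointwise convergence your dominated-convergence step requires. With that one sentence replaced by this deduction, your proof is complete.
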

%%%%%%%%%%%%%%%%%%%%%%%%%%%%%%%%%%%%%%%%%%%%%

We next recall the H\"older inequality of the Lorentz space. 
%%%%%%%%%%%%%%%%%%%%%%%%%%%%%%%%%%%%%%%%%%%%%
\begin{lemma}\label{lem_hol}
Let
$1<q<\infty$ and $1\le r \le \infty$ with $1/q^{\prime} + 1/q =1$ and $1/r^{\prime} + 1/r=1$.
Then pointwise multiplication is a bounded bilinear operator in the following cases (i), (ii) and (iii). 
\begin{itemize}
\item[(i)] from $L^{q,r}(\re^n)\times L^{\infty}(\re^n)$ to $L^{q,r}(\re^n)${\rm ;}
\item[(ii)] from $L^{q,r}(\re^n)\times L^{q^{\prime},r^{\prime}}(\re^n)$ to $L^1(\re^n)${\rm ;} 
\item[(iii)] from $L^{q_1,r_1}(\re^n) \times L^{q_2,r_2}(\re^n)$ to $L^{q,r}(\re^n)$ for
$1< q_1, q_2 < \infty$ with $1/q_1+ 1/q_2 = 1/q$ and $1\le r_1, r_2 \le \infty$ with 
$1/r_1 + 1/r_2 = 1/r$.
\end{itemize}
\end{lemma}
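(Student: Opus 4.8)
The plan is to reduce all three statements to two elementary facts about the decreasing rearrangement $f^{\ast}$: the Hardy--Littlewood inequality
\[
	\int_{\re^n} |fg|\, dx \le \int_0^{\infty} f^{\ast}(t)\, g^{\ast}(t)\, dt,
\]
and the submultiplicativity of rearrangements under products, namely $(fg)^{\ast}(t_1+t_2) \le f^{\ast}(t_1)\, g^{\ast}(t_2)$ for all $t_1,t_2>0$. The latter I would derive by writing the distribution function $d_f(\lambda) = \mu(\{|f|>\lambda\})$ and observing that $\{|fg|>\lambda\mu\} \subset \{|f|>\lambda\}\cup\{|g|>\mu\}$, whence $d_{fg}(\lambda\mu) \le d_f(\lambda) + d_g(\mu)$. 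Setting $\lambda = f^{\ast}(t_1)$ and $\mu = g^{\ast}(t_2)$ and using that $f^{\ast}$ is the generalized inverse of $d_f$ (so that $d_f(f^{\ast}(t_1)) \le t_1$ by the right-continuity of $d_f$) gives $d_{fg}(f^{\ast}(t_1)g^{\ast}(t_2)) \le t_1+t_2$, which is exactly the claimed bound. Taking $t_1=t_2=t/2$ then yields the convenient pointwise estimate $(fg)^{\ast}(t) \le f^{\ast}(t/2)\, g^{\ast}(t/2)$.

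Part (i) is the easiest and requires no rearrangement inequality beyond monotonicity: since $|f(x)g(x)| \le \|g\|_{L^{\infty}}\,|f(x)|$ almost everywhere, one has $(fg)^{\ast}(t) \le \|g\|_{L^{\infty}}\,f^{\ast}(t)$, and substituting this into the definition of $\|\cdot\|_{L^{q,r}}$ gives $\|fg\|_{L^{q,r}} \le \|g\|_{L^{\infty}}\|f\|_{L^{q,r}}$ directly. For part (ii) I would start from the Hardy--Littlewood inequality and, using $1/q+1/q'=1$, rewrite
\[
	\int_{\re^n}|fg|\,dx
	\le \int_0^{\infty} \bigl(t^{1/q} f^{\ast}(t)\bigr)\bigl(t^{1/q'} g^{\ast}(t)\bigr)\,\frac{dt}{t},
\]
and then apply the ordinary Hölder inequality in $t$ with conjugate exponents $r,r'$ (recall $1/r+1/r'=1$). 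The two factors that result are precisely $\|f\|_{L^{q,r}}$ and $\|g\|_{L^{q',r'}}$ up to the normalizing constants, so the $L^1$-bound follows; the boundary pair $r=1,\,r'=\infty$ is handled by replacing this Hölder step with the trivial $L^1$--$L^{\infty}$ pairing.

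For part (iii), the general case, I would insert the pointwise bound $(fg)^{\ast}(t) \le f^{\ast}(t/2)g^{\ast}(t/2)$ into the $L^{q,r}$-norm and use $t^{1/q} = t^{1/q_1}t^{1/q_2}$ (valid since $1/q=1/q_1+1/q_2$) to obtain, after the change of variable $t\mapsto 2t$,
\[
	\|fg\|_{L^{q,r}}^r
	\le C \int_0^{\infty} \bigl(t^{1/q_1} f^{\ast}(t)\bigr)^r \bigl(t^{1/q_2} g^{\ast}(t)\bigr)^r \,\frac{dt}{t}.
\]
Because $r/r_1 + r/r_2 = 1$, Hölder's inequality with exponents $r_1/r$ and $r_2/r$ separates this into $\|f\|_{L^{q_1,r_1}}^r\,\|g\|_{L^{q_2,r_2}}^r$ up to a constant, and taking $r$-th roots finishes the case $r_1,r_2<\infty$. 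The main obstacle, and the step demanding the most care, is the treatment of the endpoint exponents $r_1=\infty$ or $r_2=\infty$: there the integral Hölder step degenerates, and one must instead pull out the corresponding factor as a supremum, e.g. $t^{1/q_2} g^{\ast}(t) \le \|g\|_{L^{q_2,\infty}}$, before estimating the remaining single integral in the other factor. I would also note that since $\|\cdot\|_{L^{q,r}}$ is only a quasi-norm, the constant $C$ generated by the dyadic splitting $t_1=t_2=t/2$ is harmless and need not be optimized.
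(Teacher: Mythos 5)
Your proof is correct, but it is worth noting that the paper itself does not prove this lemma at all: it simply cites \cite[Proposition 2.3]{Le} (Lemari\'e-Rieusset). So your argument is a genuinely self-contained alternative. The route you take is the classical rearrangement one: part (i) from monotonicity of $f \mapsto f^{\ast}$ under pointwise domination, part (ii) from the Hardy--Littlewood pairing inequality $\int |fg|\,dx \le \int_0^\infty f^{\ast}g^{\ast}\,dt$ followed by H\"older in the measure $dt/t$, and part (iii) from the submultiplicativity estimate $(fg)^{\ast}(t_1+t_2)\le f^{\ast}(t_1)g^{\ast}(t_2)$ (a weak form of O'Neil's inequality), whose derivation via $\{|fg|>\lambda\mu\}\subset\{|f|>\lambda\}\cup\{|g|>\mu\}$ and $d_f(f^{\ast}(t))\le t$ is correct --- the latter inequality does require the right-continuity of $d_f$ as you say, and it holds even in the degenerate case $f^{\ast}(t)=0$. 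Your endpoint handling is also sound: when $r_1=\infty$ or $r_2=\infty$ the H\"older step is replaced by pulling out the supremum $\sup_t t^{1/q_2}g^{\ast}(t)$, and when $r=\infty$ (forcing $r_1=r_2=\infty$) the whole computation is a supremum bound rather than the displayed integral identity --- you should state that last case explicitly, since your formula $\|fg\|_{L^{q,r}}^r$ is written only for finite $r$, but it is a one-line fix. What each approach buys: the paper's citation keeps the exposition short and leans on a standard reference, while your proof makes the Lorentz-space H\"older inequality self-contained from first principles (layer-cake/distribution-function facts only), at the cost of about a page; since the constants are never tracked and only boundedness of the bilinear map is claimed, the non-optimized constant $2^{1/q}$ coming from the splitting $t_1=t_2=t/2$ and the quasi-norm normalization factors $(r/q)^{1/r}$ are, as you observe, harmless.
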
 
%%%%%%%%%%%%%%%%%%%%%%%%%%%%%%%%%%%%%%%%%%%%%
For the proof, see for example \cite[Proposition 2.3]{Le}.

\subsection{Proof of Theorems \ref{thm1} and \ref{thm2}}
One of the crucial points is the estimate of the pressure.
To this end, we employ the expression of the pressure
\[
	p = \sum_{i,j=1}^n R_iR_j(v_iv_j),
\]
where
$R_i = \partial_{x_i}(-\Delta)^{-1/2}$
($i=1,\ldots, n$) denotes the Riesz transform.
When
$p$
satisfies \eqref{ass_p},
we can justify the above formula
in the following way
and obtain the estimate of $p$.
We also refer the reader to the result by Kato \cite{Ka03} in which
the above representation is given for
$p \in L^1_{loc}([0,T) ; {\rm BMO})$ and $v \in L^{\infty}(\re^n \times (0,T))$.

%%%%%%%%%%%%%%%%%%%%%%%%%%%%%%%%%%%%%%%%%%%%
\begin{lemma}\label{lem_p}
Let the pair 
$(v, p)$ be a generalized suitable weak solution of \eqref{ns}
satisfying
$v \in L^s(0,T;L^{q,r}(\re^n))$
with some
$s\in [2,\infty]$, $q\in (2,\infty)$, $r\in [2,\infty]$
and \eqref{ass_p}.
Then, there exists a function
$\bar{p}(t) \in L^1_{loc}([0,T))$
depending only on $t$
such that the function
$p^{\prime} (x,t) := p(x,t)-\bar{p}(t)$
belongs to
$L^{s/2}(0,T; L^{q/2,r/2}(\re^n))$
with the estimate
\begin{align}
\label{es_p}
	\| p^{\prime} \|_{L^{s/2}(0,T;L^{q/2,r/2}(\re^n))} \le C \| v \|_{L^s(0,T;L^{q,r}(\re^n))}^2.
\end{align}
Moreover, when $s=3$, we have
$\bar{p}(t) \in L^{3/2}_{loc}([0,T))$.
\end{lemma}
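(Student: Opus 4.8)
The plan is to take $\tilde p := \sum_{i,j=1}^n R_iR_j(v_iv_j)$ as the canonical choice of pressure and to show that the given $p$ differs from $\tilde p$ by a function of $t$ alone. First I would place $\tilde p$ in the asserted space. Since $v \in L^s(0,T;L^{q,r}(\re^n))$, the H\"older inequality in Lorentz spaces, Lemma \ref{lem_hol}(iii), gives $v_iv_j \in L^{s/2}(0,T;L^{q/2,r/2}(\re^n))$ with $\|v_iv_j\|_{L^{s/2}(0,T;L^{q/2,r/2})} \le C\|v\|_{L^s(0,T;L^{q,r})}^2$. Because $2<q<\infty$ forces $1<q/2<\infty$, each Riesz transform $R_i$ is bounded on $L^{q/2,r/2}(\re^n)$ (a Calder\'on--Zygmund operator, bounded on $L^{q/2}$ and hence, by real interpolation, on the Lorentz scale). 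Applying this twice yields $\tilde p \in L^{s/2}(0,T;L^{q/2,r/2})$ together with the quadratic bound \eqref{es_p}.

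Next I would identify $p-\tilde p$. Taking the divergence of the momentum equation in \eqref{ns} and using $\diver v = 0$ gives, in the sense of distributions, $-\Delta p = \sum_{i,j}\partial_i\partial_j(v_iv_j)$ for a.e.\ $t$; on the other hand $-\Delta\tilde p = \sum_{i,j}\partial_i\partial_j(v_iv_j)$ by construction, since $R_iR_j = \partial_i\partial_j(-\Delta)^{-1}$. Hence $\Delta(p-\tilde p)=0$ in $\mathcal D'(\re^n)$ for a.e.\ fixed $t$, and Weyl's lemma upgrades $p-\tilde p$ to a genuinely harmonic (in particular smooth in $x$) function for such $t$.

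The heart of the argument is then the Liouville step, where the growth hypothesis \eqref{ass_p} enters. By the mean value property of harmonic functions,
\[
	(p-\tilde p)(x,t) = \frac{1}{|B_{|x|/2}(x)|}\int_{B_{|x|/2}(x)}(p-\tilde p)(y,t)\,dy .
\]
The average of $p$ is $o(|x|)$ by \eqref{ass_p}, while the average of $\tilde p$ tends to $0$: applying H\"older in Lorentz spaces (Lemma \ref{lem_hol}) on $B_{|x|/2}(x)$, whose volume is comparable to $|x|^n$, bounds this average by $C|x|^{-2n/q}\|\tilde p(\cdot,t)\|_{L^{q/2,r/2}(B_{|x|/2}(x))}$, which vanishes as $|x|\to\infty$ (using Lemma \ref{lem_b} when $r<\infty$, and the decaying prefactor otherwise). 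Consequently $(p-\tilde p)(x,t)=o(|x|)$, and a harmonic function of sublinear growth is necessarily constant in $x$ by the interior gradient estimate. Defining $\bar p(t)$ to be this constant gives $p'(x,t):=p(x,t)-\bar p(t)=\tilde p(x,t)$, so \eqref{es_p} is exactly the bound established in the first step.

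Finally, the integrability of $\bar p$ follows by integrating $\bar p(t)=p(x,t)-\tilde p(x,t)$ over a fixed ball $B$: one has $|B|\,\bar p(t)=\int_B p\,dx-\int_B\tilde p\,dx$. Condition (i) of Definition \ref{def1} gives $p\in L^{3/2}_{loc}(\re^n\times[0,T))$, whence $\int_B p\,dx\in L^{3/2}_{loc}([0,T))\subset L^1_{loc}([0,T))$, and $\int_B\tilde p\,dx\in L^{s/2}_{loc}([0,T))\subset L^1_{loc}([0,T))$ since $s\ge 2$; this yields $\bar p\in L^1_{loc}([0,T))$. When $s=3$ both terms lie in $L^{3/2}_{loc}([0,T))$, giving the sharper conclusion $\bar p\in L^{3/2}_{loc}([0,T))$. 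I expect the main obstacle to be precisely the Liouville step: it is the interplay between the mean-value identity, the decay of the Lorentz-space average of $\tilde p$, and the hypothesis \eqref{ass_p} that converts the merely distributional identity $\Delta(p-\tilde p)=0$ into the clean splitting $p=\tilde p+\bar p(t)$.
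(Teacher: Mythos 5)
Your proposal is correct and follows essentially the same route as the paper: the same Riesz-transform representation $p'=\sum_{i,j}R_iR_j(v_iv_j)$ bounded on Lorentz spaces by interpolation and H\"older, the same slicing-in-time plus Weyl's lemma to make $p-p'$ harmonic for a.e.\ $t$, the same mean-value/Liouville argument using \eqref{ass_p} and the decay of the Lorentz average, and the same fixed-ball integration to get the time integrability of $\bar p$. The only (harmless) difference is cosmetic: you apply the mean-value identity to $p-\tilde p$ directly, whereas the paper estimates $|\bar p|$ by averaging $|p|+|p'|$.
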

%%%%%%%%%%%%%%%%%%%%%%%%%%%%%%%%%%%%%%%%%%%%%%%%%%
\begin{proof}
Taking the divergence to both sides of the equation \eqref{ns}, we have
\[
	-\Delta p = \diver( (v\cdot \nabla)v ) = \sum_{i,j=1}^n \partial_{x_i}\partial_{x_j} (v_i v_j)
\]
in the sense of distribution on $\re^n \times (0,T)$.
Let us define the function $p^{\prime}$ by
\[
	p^{\prime} = \sum_{i,j=1}^n R_iR_j(v_iv_j).
\]
By using the boundedness of the Riesz transform on
$L^{\mu}(\re^n)$ for $1<\mu<\infty$
and the general Marcinkiewicz interpolation theorem
(see \cite[Theorem 5.3.2, p.113]{BeLobook}),
we have the boundedness of $R_i$ on $L^{\mu, \rho}(\re^n)$
for $1<\mu<\infty$ and $\rho \in [1,\infty]$, i.e., 
\[
	\| R_i f \|_{L^{\mu,\rho}} \le C \|f \|_{L^{\mu,\rho}}
\quad
\mbox{for all $f \in L^{\mu, \rho}(\re^n)$}.  
\]
By using this estimate twice, we have
\[
	\| p'(t) \|_{L^{q/2,r/2}} \le C \sum_{i,j=1}^n \| R_j (v_i v_j )(t) \|_{L^{q/2,r/2}}
	\le C \sum_{i,j=1}^n \| v_i v_j(t) \|_{L^{q/2,r/2}}
\]
for $q\in (2,\infty)$, $r \in [2,\infty]$
and $t \in (0,T)$.
Applying Proposition \ref{lem_hol} (iii)
and integrating it over $(0,T)$,
we see that $p^{\prime}$ is subject to the estimate \eqref{es_p}. 

Now it remains to show that the function
$\bar{p}(x,t) := p(x,t) - p^{\prime}(x,t)$
is independent of
$x$.
We first note that
$\bar{p} \in L^1_{loc}(\re^n \times [0,T))$.
It is easy to verify that 
$$
	-\Delta p' = \sum_{i,j=1}^n \partial_{x_i}\partial_{x_j} (v_iv_j)
$$
in the sense of distribution in $\re^n \times (0,T)$.
This implies
$-\Delta \bar{p} = 0$
in the sense of distribution in $\re^n \times (0,T)$.
From this, we see that
for almost every $t\in (0,T)$,
the function $\bar{p}(\cdot, t)$ satisfies
$-\Delta \bar{p}(t) = 0$
in the sense of distribution in $\re^n$.
In fact,
by
$\bar{p} \in L^1_{loc} (\re^n \times (0,T))$,
for any $\varphi \in C_0^{\infty}(\re^n)$ and $\eta \in C_0^{\infty}((0,T))$,
we have
\begin{align*}%
	0 &= \iint_{\re^n \times (0,T)} \bar{p}(x,t) (-\Delta \varphi(x) \eta(t) ) dxdt \\
	&= \int_0^T \left( \int_{\re^n} \bar{p}(x,t) (-\Delta \varphi(x)) dx \right)
		\eta (t) dt.
\end{align*}%
Since $\eta$ is arbitrary, we conclude for almost every $t \in (0,T)$ that
\begin{align*}%
	\int_{\re^n} \bar{p}(x,t) (-\Delta \varphi(x)) dx = 0,
\end{align*}%
which implies
$-\Delta \bar{p}(t) = 0$
in the sense of distribution in $\re^n$.
Hence, by Weyl's lemma, for almost every
$t \in (0,T)$,
the function
$\bar{p}(\cdot,t)$
is of class $C^{\infty}(\re^n)$ and harmonic on $\re^n$. 
By the mean value property, we have for almost every $t \in (0,T)$
$$
	\bar{p}(x,t) = \frac{1}{|B_R(x)|}\int_{B_R(x)} \bar{p}(y,t) dy
$$
for all $x \in \re^n$ and all $R>0$,
where $B_R(x)$ denotes the ball in $\re^n$ centered at $x$ with the radius $R$, and 
$|B_R(x)|$ is its volume.
Hence, taking $R = |x|/2$, 
we obtain from \eqref{ass_p} and Lemma \ref{lem_hol} (ii) that 
\begin{align*}
	|\bar{p}(x,t)| &\le
		\frac{1}{|B_{|x|/2}(x)|}
		\int_{B_{|x|/2} (x)}(|p(y,t)| + |p'(y,t)|)dy \\
	&\le
		\frac{1}{|B_{|x|/2}(x)|} \int_{B_{|x|/2} (x)} |p(y,t)| dy \\
	&\quad
	+ \frac{1}{|B_{|x|/2}(x)|}\|p'(t)\|_{L^{q/2, r/2}(B_{|x|/2}(x))}
		\|\chi_{B_{|x|/2}(x)}\|_{L^{q/(q-2),(r/2)'}} \\
	&\le
		\frac{1}{|B_{|x|/2}(x)|} \int_{B_{|x|/2} (x)} |p(y,t)| dy \\
	&\quad + \frac{C}{|B_{|x|/2}(x)|}
		\|p'(t)\|_{L^{q/2, r/2}(\re^n)}|B_{|x|/2}(x)|^{(q-2)/q} \\
	&= o(|x|) + C \|p'(t) \|_{L^{q/2, r/2}(\re^n)}|x|^{-2n/q} \\
	&= o(|x|)
\end{align*}  
for almost every $t \in (0,T)$ as $|x| \to \infty$.
Here $\chi_{B_R(x)}$ denotes the characteristic function of $B_R(x)$
(see e.g, Stein-Weiss \cite[p.192 (3.10)]{StWe}).   
Now, the classical Liouville theorem states that 
$\bar{p}(x,t) = \bar{p}(t)$,
that is,
$\bar{p}$
is independent of
$x$.
Finally, we prove
$\bar{p} \in L^{3/2}_{loc}([0,T))$ when $s=3$.
Similarly to the above, we compute
\begin{align*}%
	|\bar{p}(t)| &=
		\frac{1}{|B_1(0)|} \int_{B_1(0)} \bar{p}(t) dy \\
	&\le \frac{1}{|B_1(0)|} \int_{B_1(0)}
		( |p(y,t)| + |p'(y,t)| )dy\\
	&\le C ( \| p(t) \|_{L^{3/2}(B_1(0))} + \| p'(t) \|_{L^{q/2, r/2}(B_1(0))} ),
\end{align*}%
which implies
\begin{align*}%
	\| \bar{p} \|_{L^{3/2}(J)}
		\le C( \| p \|_{L^{3/2}(B_1(0) \times J)} + \| p' \|_{L^{3/2}(J; L^{q/2, r/2}(B_1(0)))} )
\end{align*}%
for any compact subset $J \subset [0,T)$.
This completes the proof of Lemma \ref{lem_p}.  
\end{proof}

We prepare a localized energy inequality,
which involves the initial data and is derived from
the generalized energy inequality (see Definition \ref{def1} (vi)).
%%%%%%%%%%%%%%%%%%%%%%%%%%%%%%%%%%%%%%
\begin{lemma}\label{lem_en_ineq}
Let the pair $(v, p)$ be a generalized suitable weak solution of \eqref{ns}.
Then, we have
\begin{align*}%
	&\int_{\re^n} |v(x,t)|^2 \psi(x) dx
	+2 \int_0^t \int_{\re^n} |\nabla v(x,\tau)|^2 \psi(x) dxd\tau \\
	&\le
	\int_{\re^n} |v_0(x)|^2 \psi(x) dx \\
	&\quad +\int_0^t \int_{\re^n}
		\left[ |v(x,\tau)|^2 \Delta \psi(x)
			+ (|v(x,\tau)|^2 + 2p(x,\tau) ) v(x,\tau)\cdot \nabla \psi(x) \right]
		dxd\tau 
\end{align*}%
for all $t\in (0,T)$ and for all nonnegative test functions $\psi \in C_0^{\infty}(\re^n)$.
\end{lemma}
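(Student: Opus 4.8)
The plan is to feed into the generalized energy inequality \eqref{gene_en_ineq} a test function of the separated form $\Phi(x,\tau)=\psi(x)\chi_\epsilon(\tau)$, where $\psi$ is the given spatial cutoff and $\chi_\epsilon$ is a smooth temporal cutoff approximating the indicator of $(0,t)$, and then to pass to the limit $\epsilon\to0$. Fix $t\in(0,T)$ and, for small $\epsilon>0$ with $t+\epsilon<T$, choose $\chi_\epsilon\in C_0^\infty((0,T))$ with $0\le\chi_\epsilon\le1$ that vanishes on $(0,\epsilon]\cup[t+\epsilon,T)$, equals $1$ on $[2\epsilon,t]$, increases on $[\epsilon,2\epsilon]$ and decreases on $[t,t+\epsilon]$. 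Since $\psi\ge0$ and $\chi_\epsilon\ge0$, the product $\Phi_\epsilon=\psi\chi_\epsilon$ is an admissible nonnegative test function, and because $\psi$ is time-independent one has $\partial_\tau\Phi_\epsilon=\psi\chi_\epsilon'$, $\Delta\Phi_\epsilon=\chi_\epsilon\Delta\psi$, $\nabla\Phi_\epsilon=\chi_\epsilon\nabla\psi$. Substituting into \eqref{gene_en_ineq} gives
\begin{align*}
2\int_0^T\!\!\int_{\re^n}|\nabla v|^2\psi\chi_\epsilon\,dxd\tau
&\le \int_0^T\!\!\int_{\re^n}|v|^2\psi\,\chi_\epsilon'\,dxd\tau\\
&\quad+\int_0^T\!\!\int_{\re^n}\big[|v|^2\Delta\psi+(|v|^2+2p)v\cdot\nabla\psi\big]\chi_\epsilon\,dxd\tau.
\end{align*}

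Next I would treat the three types of terms separately. The dissipation term on the left and the bracketed term on the right are handled by dominated convergence: since $\nabla v\in L^2_{loc}$ and $\psi$ has compact support, $|\nabla v|^2\psi$ is integrable on $\mathrm{supp}\,\psi\times[0,t+\epsilon_0]$, while $v\in L^3_{loc}$ and $p\in L^{3/2}_{loc}$ make the bracket integrable there by H\"older's inequality. As $\chi_\epsilon\to\mathbf{1}_{(0,t)}$ pointwise with $0\le\chi_\epsilon\le1$, these two integrals converge to $2\int_0^t\!\int|\nabla v|^2\psi$ and $\int_0^t\!\int[\cdots]$, respectively.

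The delicate term is $\int_0^T\!\int|v|^2\psi\,\chi_\epsilon'$, whose derivative is supported on $[\epsilon,2\epsilon]\cup[t,t+\epsilon]$. Writing $g(\tau):=\int_{\re^n}|v(x,\tau)|^2\psi(x)\,dx$ (finite and well defined for every $\tau$ by the weak $L^2(K)$-continuity in Definition \ref{def1}(ii), with $K=\mathrm{supp}\,\psi$), this term equals $\int_\epsilon^{2\epsilon}g\,\chi_\epsilon'\,d\tau+\int_t^{t+\epsilon}g\,\chi_\epsilon'\,d\tau$. On $[\epsilon,2\epsilon]$ one has $\chi_\epsilon'\ge0$ with $\int\chi_\epsilon'=1$, so the first piece is a unit-mass average of $g$ near $\tau=0$; the strong $L^2(K)$-continuity at $t=0$ in Definition \ref{def1}(ii) gives $g(\tau)\to\int|v_0|^2\psi$ as $\tau\to0+$, whence $\int_\epsilon^{2\epsilon}g\,\chi_\epsilon'\to\int|v_0|^2\psi$. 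On $[t,t+\epsilon]$ one has $\chi_\epsilon'\le0$ with $\int\chi_\epsilon'=-1$, and here only weak continuity is available; but this is exactly enough in the right direction, since weak continuity of $v(\cdot,\tau)$ in $L^2(K)$ yields the lower semicontinuity $\liminf_{\tau\to t+}g(\tau)\ge g(t)$, so that $\int_t^{t+\epsilon}g\,\chi_\epsilon'=-\int_t^{t+\epsilon}g\,|\chi_\epsilon'|$ satisfies $\limsup_{\epsilon\to0}(\cdot)\le -g(t)=-\int|v(t)|^2\psi$.

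I expect this endpoint estimate at $\tau=t$ to be the main obstacle: the weighted norm $g$ is only weakly continuous in time and is not known to converge, and the argument hinges on the fact that the unfavorable sign of $\chi_\epsilon'$ near $\tau=t$ combines with weak lower semicontinuity to produce a bound of the correct sign. Collecting the three limits and using $\limsup(a_\epsilon+b_\epsilon)\le\lim a_\epsilon+\limsup b_\epsilon$ when $a_\epsilon$ converges, I would take $\limsup_{\epsilon\to0}$ in the displayed inequality to obtain
$$
2\int_0^t\!\int|\nabla v|^2\psi\le\int|v_0|^2\psi-\int|v(t)|^2\psi+\int_0^t\!\int\big[|v|^2\Delta\psi+(|v|^2+2p)v\cdot\nabla\psi\big],
$$
which is the asserted inequality after moving $\int|v(t)|^2\psi$ to the left-hand side.
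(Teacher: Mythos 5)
Your proof is correct and follows essentially the same approach as the paper's: substitute the separated test function $\Phi = \psi(x)\eta_\varepsilon(\tau)$ into \eqref{gene_en_ineq}, pass to the limit in the dissipation and bracket terms by local integrability and dominated convergence, recover the initial term from the strong $L^2(K)$-continuity at $\tau=0$, and handle the endpoint $\tau=t$ by combining the sign of the cutoff's derivative with weak lower semicontinuity of the weighted norm $\|\cdot\|_{L^2(K,\psi\,dx)}$. The only immaterial difference is that you place the decreasing edge of the temporal cutoff on $[t,t+\varepsilon]$ and invoke weak continuity from the right, whereas the paper places it on $(t-\varepsilon,t)$ and argues from the left.
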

%%%%%%%%%%%%%%%%%%%%%%%%%%%%%%%%%%%%%%
%%%%%%%%%%%%%%%%%%%%%%%%%%%%%%%%%%%%%%
\begin{proof} 
We take a time $t \in (0,T)$ and a nonnegative test function
$\psi \in C_0^{\infty}(\re^n)$,
and fix them.
Let $\varepsilon \in (0, t/2)$ be a small parameter and
let $\eta_{\varepsilon} \in C_0^{\infty}((0,t))$ be a nonnegative test function
such that
$\eta_{\varepsilon}(\tau) = 1$
for $\tau\in [\varepsilon, t-\varepsilon]$,
$\eta_{\varepsilon}$ is increasing (resp. decreasing) on
$(0,\varepsilon)$ (resp. $(t-\varepsilon, t)$)
and
$|\eta_{\varepsilon}^{\prime}(\tau)| \le C/\varepsilon$
for
$\tau \in (0,\varepsilon)\cup (t-\varepsilon, t)$. 
Substituting
$\Phi(x, \tau) = \psi (x) \eta_{\varepsilon}(\tau)$
in the generalized energy inequality \eqref{gene_en_ineq}, we see that
\begin{align*}%
	2 \int_0^t \int_{\re^n} |\nabla v|^2 \psi \eta_{\varepsilon} dxd\tau
	&\le \int_0^t \int_{\re^n}
		\left[ |v|^2 ( \psi \eta_{\varepsilon}^{\prime} + (\Delta \psi) \eta_{\varepsilon})
		+ (|v|^2 + 2p ) v\cdot \nabla \psi \eta_{\varepsilon} \right]
		dxd\tau.
\end{align*}%
Since 
$v\in L^3_{loc}(\re^n\times [0,T))$,
$\nabla v \in L^2_{loc}(\re^n \times [0,T))$
and
$p \in L^{3/2}_{loc}(\re^n \times [0,T))$, implied by  Definition \ref{def1}(i), 
we easily notice that
\begin{align*}%
	\lim_{\varepsilon \to 0} \int_0^t \int_{\re^n} |\nabla v|^2 \psi \eta_{\varepsilon} dxd\tau
	&= \int_0^t \int_{\re^n} |\nabla v|^2 \psi dxd\tau,\\
	\lim_{\varepsilon \to 0} \int_0^t \int_{\re^n} |v|^2 (\Delta \psi) \eta_{\varepsilon} dxd\tau
	&= \int_0^t \int_{\re^n} |v|^2 \Delta \psi  dxd\tau,\\
	\lim_{\varepsilon \to 0} \int_0^t \int_{\re^n}
		(|v|^2 + 2p ) v\cdot \nabla \psi \eta_{\varepsilon} dxd\tau
	&= \int_0^t \int_{\re^n}
		(|v|^2 + 2p ) v\cdot \nabla \psi dxd\tau.
\end{align*}%
Therefore, for the proof it suffices to show that
\begin{align}%
\label{eta}
	\liminf_{\varepsilon \to 0}
		\int_0^t \int_{\re^n} |v|^2 \psi \eta_{\varepsilon}^{\prime} dx d\tau
	&\le \int_{\re^n} |v_0(x)|^2 \psi (x) dx - \int_{\re^n} |v(x,t)|^2 \psi (x) dx.
\end{align}%
It follows from $\eta_{\varepsilon}^{\prime} = 0$ on $(\varepsilon, t-\varepsilon)$ that
\begin{align*}%
	\int_0^t \int_{\re^n} |v|^2 \psi \eta_{\varepsilon}^{\prime} dx d\tau
	= \int_0^{\varepsilon} \int_{\re^n} |v|^2 \psi \eta_{\varepsilon}^{\prime} dx d\tau
	+ \int_{t-\varepsilon}^{t} \int_{\re^n} |v|^2 \psi \eta_{\varepsilon}^{\prime} dx d\tau.
\end{align*}%
Recalling the property (ii) of Definition \ref{def1}
and noting
$\int_0^{\varepsilon} \eta_{\varepsilon}^{\prime}(\tau) d\tau = 1$
and
$|\eta_{\varepsilon}^{\prime}(\tau)| \le C/\varepsilon$ for $\tau \in (0,\varepsilon)$,
we compute
\begin{align*}%
	&\int_0^{\varepsilon} \int_{\re^n}
		|v(x,\tau)|^2 \psi(x) \eta_{\varepsilon}^{\prime}(\tau) dx d\tau
		- \int_{\re^n} |v_0(x)|^2 \psi(x) dx \\
	&= \int_0^{\varepsilon} \left( \int_{\re^n}
		\left( |v(x,\tau)|^2 - |v_0(x)|^2 \right) \psi(x)  dx \right)
			\eta_{\varepsilon}^{\prime}(\tau) d\tau \\
	&\to 0
\end{align*}%
as $\varepsilon \to 0$.
Let us estimate the second term
$\int_{t-\varepsilon}^{t} \int_{\re^n} |v|^2 \psi \eta_{\varepsilon}^{\prime} dx d\tau$.
Using the weak continuity of $v$ in $L^2(K)$ on $[0,T)$,
where $K = {\rm supp\,} \psi$,
we see that
$v \in C_w([0,T), L^2(K, \psi(x)dx) )$.
Indeed, for any $\phi \in L^2(K, \psi (x)dx)$, we have
\begin{align*}%
	( v(\tau), \phi )_{L^2(K, \psi dx)}
	&= ( v(\tau), \phi \psi )_{L^2(K)}
\end{align*}%
and the right-hand side is continuous on $[0,T)$ with respect to $\tau$.
Thus, we calculate
\begin{align*}%
	\| v(t) \|_{L^2(K, \psi(x) dx)}^2
	&\le \liminf_{\substack{\tau \to t \\ \tau < t}} \| v(\tau) \|_{L^2(K, \psi(x)dx)}^2 \\
	&= \lim_{\varepsilon \to 0}
		\left( \inf_{t-\varepsilon < \tau < t} \int_{\re^n} |v(x,\tau)|^2 \psi(x) dx \right) \\
	&\le \limsup_{\varepsilon \to 0}
		\int_{t-\varepsilon}^t ( - \eta_{\varepsilon}^{\prime}(\tau) )
			\left( \int_{\re^n} |v(x,\tau)|^2 \psi(x) dx \right)
			d\tau \\
	& = - \liminf_{\varepsilon \to 0}
		\int_{t-\varepsilon}^t 
			\int_{\re^n} |v(x,\tau)|^2 \psi(x) \eta_{\varepsilon}^{\prime}(\tau) dxd\tau.
\end{align*}%
These estimates prove \eqref{eta}.
\end{proof}
%%%%%%%%%%%%%%%%%%%%%%%%%%%%%%%%%%%%%%

Finally, we verify that the pressure $p$ in Lemma \ref{lem_en_ineq}
can be replaced by
$p^{\prime}$ in Lemma \ref{lem_p}.
%%%%%%%%%%%%%%%%%%%%%%%%%%%%%%%%%%%%%%
\begin{lemma}\label{lem_p_prime}
Let the pair 
$(v, p)$ be a generalized suitable weak solution of \eqref{ns}. 
Assume that $v$ satisfies 
$v \in L^3(0,T;L^{q,r}(\re^n))$
with some
$q\in (2,\infty)$, $r\in [2,\infty]$
and that $p$ satisfies \eqref{ass_p}. 
Let $p^{\prime}$ be as in Lemma \ref{lem_p}.
Then, we have
\begin{align*}%
	\int_0^t \int_{\re^n} p(x,\tau) v(x,\tau) \cdot \nabla \psi(x) dxd\tau
	&= \int_0^t \int_{\re^n} p^{\prime}(x,\tau) v(x,\tau) \cdot \nabla \psi(x) dxd\tau
\end{align*}%
for all $t \in (0,T)$ and all nonnegative test functions 
$\psi \in C_0^{\infty}(\re^n)$. 
\end{lemma}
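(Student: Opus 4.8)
The plan is to exploit the single structural fact established in Lemma \ref{lem_p}, namely that the difference $p-p' = \bar p(\tau)$ is a function of time alone, together with the divergence-free constraint on $v$. First I would rewrite the difference of the two integrals, using $p = p' + \bar p$, as
\[
	\int_0^t \int_{\re^n} (p-p')(x,\tau)\, v(x,\tau)\cdot\nabla\psi(x)\, dxd\tau
	= \int_0^t \bar p(\tau) \left( \int_{\re^n} v(x,\tau)\cdot\nabla\psi(x)\, dx \right) d\tau ,
\]
where the factoring of the purely time-dependent $\bar p(\tau)$ out of the spatial integral is legitimate once we know the double integral is absolutely convergent and Fubini's theorem applies.

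The heart of the argument is that the inner spatial integral
$g(\tau) := \int_{\re^n} v(x,\tau)\cdot\nabla\psi(x)\, dx$
vanishes for almost every $\tau$. Since $\diver v = 0$ in the sense of distributions on $\re^n\times(0,T)$ by Definition \ref{def1}(iii), I would test the divergence against the separated function $\psi(x)\eta(\tau)$ with an arbitrary $\eta\in C_0^{\infty}((0,T))$, obtaining
\[
	0 = \int_0^T \int_{\re^n} v(x,\tau)\cdot\nabla\bigl(\psi(x)\eta(\tau)\bigr)\, dxd\tau
	= \int_0^T \eta(\tau)\, g(\tau)\, d\tau .
\]
Because $g\in L^1_{loc}((0,T))$ (a consequence of $v\in L^3_{loc}(\re^n\times[0,T))$ and the compact support of $\nabla\psi$) and $\eta$ is arbitrary, this forces $g(\tau)=0$ for a.e.\ $\tau\in(0,T)$. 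Hence the time integral displayed above is zero, which yields the claimed identity.

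To justify the use of Fubini I would verify absolute integrability on $K := {\rm supp\,}\psi$. Here $\bar p\in L^{3/2}_{loc}([0,T))$ by the case $s=3$ of Lemma \ref{lem_p}, which applies since $v\in L^3(0,T;L^{q,r}(\re^n))$. Estimating by H\"older in space, $\int_K |v(\cdot,\tau)|\,dx \le |K|^{2/3}\|v(\tau)\|_{L^3(K)}$, and then by H\"older in time with exponents $3/2$ and $3$, the whole double integral is bounded by
\[
	\|\nabla\psi\|_{L^{\infty}} |K|^{2/3}\, \|\bar p\|_{L^{3/2}(0,t)}\, \|v\|_{L^3(0,t;L^3(K))} < \infty .
\]
The main obstacle is purely technical, namely ensuring this integrability so that Fubini's theorem is legitimate; the conceptual point — that an additive term in the pressure depending only on time integrates to zero against the divergence-free velocity — is immediate once $g(\tau)=0$ a.e.\ has been established.
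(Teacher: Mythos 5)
Your proof is correct, and it handles the key technical point by a genuinely different (if modest) device than the paper. Both arguments make the same reduction: by Lemma \ref{lem_p} the difference $p-p'=\bar p(\tau)$ depends only on time, so the claim amounts to $\int_0^t\int_{\re^n}\bar p(\tau)\,v\cdot\nabla\psi\,dxd\tau=0$, and both exploit the distributional constraint $\diver v=0$ from Definition \ref{def1}(iii) via separated test functions, together with $\bar p\in L^{3/2}_{loc}([0,T))$ (the $s=3$ case of Lemma \ref{lem_p}). The difference lies in how the non-smoothness of $\bar p$ in time is dealt with. The paper puts $\bar p$ itself into the test function: it mollifies, setting $\bar p_\varepsilon=\rho_\varepsilon\ast(\chi_{[\varepsilon,t-\varepsilon]}\bar p)\in C_0^{\infty}((0,t))$, tests $\diver v=0$ against $\bar p_\varepsilon\psi\in C_0^{\infty}(\re^n\times(0,t))$ to get the vanishing of the regularized integral, and then passes to the limit using $\bar p_\varepsilon\to\bar p$ in $L^{3/2}(0,t)$ and $v\in L^3_{loc}$. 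You never put $\bar p$ into a test function: you test against $\psi(x)\eta(\tau)$ with arbitrary $\eta\in C_0^{\infty}((0,T))$, invoke the fundamental lemma of the calculus of variations (legitimate since $g(\tau)=\int_{\re^n}v\cdot\nabla\psi\,dx$ lies in $L^1_{loc}$) to get $g(\tau)=0$ for a.e.\ $\tau$, and then conclude via Fubini, whose hypotheses you verify with the same $L^{3/2}$--$L^3$ H\"older pairing that the paper uses in its limit passage. Your route is slightly more elementary (no mollifier, no approximation step) and isolates the reusable fact that the flux $g$ vanishes almost everywhere; the paper's route trades the Fubini bookkeeping for a single admissible test function plus a convergence argument. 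Both proofs are complete and rest on exactly the same two ingredients from Lemma \ref{lem_p}.
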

%%%%%%%%%%%%%%%%%%%%%%%%%%%%%%%%%%%%%%
%%%%%%%%%%%%%%%%%%%%%%%%%%%%%%%%%%%%%%
\begin{proof}
We take a time $t \in (0,T)$ and
a nonnegative test function $\psi \in C_0^{\infty}(\re^n)$,
and fix them.
Since $p^{\prime}(x,\tau) = p(x,\tau) - \bar{p}(\tau)$,
it suffices to show that
\begin{align}%
\label{pbar}
	\int_0^t \int_{\re^n} \bar{p}(\tau) v(x,\tau) \cdot \nabla \psi (x) dxd\tau = 0.
\end{align}%
Here we note that
$\bar{p} \in L^{3/2}_{loc}([0,T))$
holds by Lemma \ref{lem_p} with $s=3$
and hence, the integral of the left-had side makes sense.
Let
$\varepsilon \in (0, t/2)$
and we define a test function
$\bar{p}_{\varepsilon}(\tau) \in C_0^{\infty}((0,t))$
approximating $\bar{p}(\tau)$ and defined by
$\bar{p}_{\varepsilon}(\tau)
= \rho_{\varepsilon} \ast ( \chi_{[\varepsilon, t-\varepsilon]} \bar{p} )$
with the characteristic function $\chi_{[\varepsilon, t-\varepsilon]}$
and a mollifier $\rho_{\varepsilon}$.
Then, noting that
$v$ satisfies
$\nabla \cdot v = 0$
in the distribution sense in $\re^n \times (0,T)$
and that
$\bar{p}_{\varepsilon} \psi \in C_0^{\infty}(\re^n \times (0,t))$,
we have
\begin{align}%
\label{eq_bpep}
	\int_0^t \int_{\re^n} \bar{p}_{\varepsilon}(\tau) v(x,\tau) \cdot \nabla \psi (x) dxd\tau = 0.
\end{align}%
Noting
$\lim_{\varepsilon \to 0} \| \bar{p}_{\varepsilon} - \bar{p} \|_{L^{3/2}(0,t)} = 0$
and $v \in L^3_{loc}( \re^n \times [0,T))$,
we let $\varepsilon \to 0$ to obtain
\begin{align*}%
	\lim_{\varepsilon \to 0}
		\int_0^t \int_{\re^n} \bar{p}_{\varepsilon}(\tau) v(x,\tau) \cdot \nabla \psi (x) dxd\tau
	=
	\int_0^t \int_{\re^n} \bar{p}(\tau) v(x,\tau) \cdot \nabla \psi (x) dxd\tau.
\end{align*}%
This and \eqref{eq_bpep} imply \eqref{pbar}.
\end{proof}
%%%%%%%%%%%%%%%%%%%%%%%%%%%%%%%%%%%%%%

Now we are in a position to prove
Theorems \ref{thm1} and \ref{thm2}.
%%%%%%%%%%%%%%%%%%%%%%%%%%%%%%%%%%%%%%%%%%%%%%%
\begin{proof}[Proof of Theorems \ref{thm1} and \ref{thm2}]
Let the pair $(v, p)$ 
be a generalized suitable weak solution of \eqref{ns}. 
Assume that 
$v \in L^3(0,T;L^{q_1, r_1}(\re^n)) \cap L^2(0,T; L^{q_2, r_2}(\re^n))$ 
for $q_1, q_2, r_1, r_2$ with (\ref{rq1}) and (\ref{rq2}),  and assume also that 
$p$ satisfies (\ref{ass_p}). 
Let
$\psi = \psi(x) \in C_0^{\infty}(\re^n)$
be a test function satisfying
\[
	\psi(x) = \begin{cases} 1,&|x|\le 1,\\ 0,&|x|\ge 2,\end{cases} \quad
	0 \le \psi \le 1.
\]
We define a family
$\{ \psi_R \}$
of cut-off functions with large parameter
$R>0$
by
$\psi_R(x) = \psi(x/R)$.
Applying Lemmas \ref{lem_en_ineq} and \ref{lem_p_prime}, we have
\begin{align*}
	&\int_{\re^n} |v(t)|^2 \psi_R dx
		+ 2\int_0^t \int_{\re^n} |\nabla v |^2 \psi_R dx d\tau \\
	&\quad \le
		\int_{\re^n} |v_0|^2 \psi_R dx
		+ \int_0^t \int_{\re^n} |v|^2 \Delta \psi_R dx d\tau \\
	&\qquad
	+ \int_0^t \int_{\re^n} |v|^2 v \cdot \nabla \psi_R dx d\tau
		+ 2\int_0^t \int_{\re^n} p^{\prime} v \cdot \nabla \psi_R dx d\tau \\
	&\quad =: \int_{\re^n} |v_0|^2 \psi_R dx
		+I_R^{(1)} + I_R^{(2)} + I_R^{(3)}.
\end{align*}
We first estimate
$I_R^{(1)}$ and $I_R^{(2)}$.
>From Lemma \ref{lem_hol},
we obtain
\begin{align*}
	I_R^{(1)} & \le C R^{-2}
		\int_0^t \| |v|^2 \|_{L^{\frac{q_2}{2}, \frac{r_2}{2}}(R\le |x|\le 2R)}
			\| \Delta \psi ( \cdot / R ) \|_{%
				L^{\frac{q_2}{q_2-2}, (\frac{r_2}{2})^{\prime}} } d\tau\\
	&\le CR^{-2+\frac{n(q_2-2)}{q_2}}
			\int_0^t \| v \|_{L^{q_2,r_2}(R\le |x| \le 2R)}^2
				\| \Delta \psi \|_{L^{\frac{q_2}{q_2-2},(\frac{r_2}{2})^{\prime}}}
			d\tau \\
	&\le CR^{-2+\frac{n(q_2-2)}{q_2}} \int_0^t \| v \|_{L^{q_2,r_2}(R\le |x|\le 2R)}^2 d\tau
\end{align*}
and
\begin{align*}
	I_R^{(2)} &\le CR^{-1} \int_0^t \| |v|^3 \|_{L^{\frac{q_1}{3}, \frac{r_1}{3} }(R\le |x|\le 2R)}
		\| \nabla \psi (\cdot /R) \|_{%
				L^{\frac{q_1}{q_1-3}, (\frac{r_1}{3})^{\prime}} } d\tau \\
	&\le C R^{-1+\frac{n(q_1-3)}{q_1}} \int_0^t \| v \|_{L^{q_1,r_1}(R\le |x|\le 2R)}^3
		\| \nabla \psi \|_{L^{%
					\frac{q_1}{q_1-3}, (\frac{r_1}{3})^{\prime}} } d\tau \\
	&= C R^{-1+\frac{n(q_1-3)}{q_1}} \int_0^t \|v\|_{L^{q_1,r_1}(R\le |x|\le 2R)}^3 d\tau.
\end{align*}
Moreover, by Lemma \ref{lem_p}, we have
\begin{align*}
	I_R^{(3)} &\le CR^{-1}
		\int_0^t \| p^{\prime}v \|_{L^{\frac{q_1}{3}, \frac{r_1}{3}}(R\le |x|\le 2R)}
		\| \nabla \psi (\cdot /R) \|_{L^{\frac{q_1}{q_1-3}, ( \frac{r_1}{3} )^{\prime}}} d\tau \\
	&\le C R^{-1+\frac{n(q_1-3)}{q_1}}
		\int_0^t \| p^{\prime} \|_{L^{\frac{q_1}{2}, \frac{r_1}{2} }(R\le |x|\le 2R)}
			\| v \|_{L^{q_1, r_1}(R\le |x|\le 2R)}
			\| \nabla \psi \|_{L^{\frac{q_1}{q_1-3}, ( \frac{r_1}{3} )^{\prime}}} d\tau \\
	&\le C R^{-1+\frac{n(q_1-3)}{q_1}}
		\int_0^t \| p^{\prime} \|_{L^{\frac{q_1}{2}, \frac{r_1}{2} }(R\le |x|\le 2R)}
			\| v \|_{L^{q_1, r_1}(R\le |x|\le 2R)} d\tau.
\end{align*}
Let us first prove Theorem \ref{thm1}.
By the assumptions \eqref{rq1} and \eqref{rq2},
we have
\[
	-2+\frac{n (q_2-2)}{q_2} < 0,\ 2 \le r_2 \le \infty\quad
	\mbox{or}\quad 
	-2+\frac{n (q_2-2)}{q_2} \le 0,\ 2 \le r_2 < \infty.
\]
If $-2+ \frac{n (q_2-2)}{q_2} < 0$, then we obtain
\[
	I_R^{(1)} \le C R^{-2+\frac{n (q_2-2)}{q_2} } \| v \|_{L^2(0,T;L^{q_2, r_2})}^2
		\to 0
\]
as $R$ tends to infinity.
On the other hand, if
$2\le r_2 < \infty$,
the fact that
$v(\tau) \in L^{q_2,r_2}(\re^n)$
for almost all
$\tau \in (0,T)$
and Lemma \ref{lem_b} imply
\[
	\lim_{R\to \infty} \| v(\tau) \|_{L^{q_2,r_2}(R\le |x|\le 2R)}^2 = 0
\]
for almost all
$\tau \in (0,T)$.
Thus, by the Lebesgue convergence theorem  we have that 
$I_R^{(1)} \to 0$
as $R$ tends to infinity.
In a similar way, we also obtain
$I_R^{(2)}, I_R^{(3)} \to 0$
as
$R$
tends to infinity.
Consequently, we conclude that
\[
	\int_{\re^n} |v(t)|^2 dx
		+ 2 \int_0^t \int_{\re^n} |\nabla v |^2 dx d\tau
	\le  \int_{\re^n} |v_0|^2 dx.
\]
This completes the proof of Theorem \ref{thm1}. 
\par
\bigskip
Next, we give the proof of Theorem \ref{thm2}.
For simplicity, we treat only Case 1,
because the other cases are quite similar.
In this case, in the same way as before, we first have
$\lim_{R\to \infty} I_R^{(1)} = 0$.
Concerning $I_R^{(2)}$, we only have
$I_R^{(2)} \le C \| v \|_{L^3(0,T;L^{\frac{3n}{n-1}, \infty})}^3$,
since
$-1 + \frac{n (q_1-3)}{q_1} = 0$.
By Lemma \ref{lem_p}, we deduce
$ \| p^{\prime} \|_{L^{q_1/2, \infty}} \le C \| v \|_{L^{q_1, \infty}}^2$
and, hence
$I_R^{(3)} \le C \| v \|_{L^3(0,T;L^{\frac{3n}{n-1}, \infty})}^3$.
Thus, letting $R \to \infty$, we conclude that 
\begin{align*}%
	\int_{\re^n} |v(t)|^2 dx
		+ 2\int_0^t \int_{\re^n} |\nabla v |^2 dx d\tau
	\le \int_{\re^n} |v_0|^2 dx 
		+ C_0 \| v \|_{L^3(0,T;L^{\frac{3n}{n-1}, \infty})}^3
\end{align*}%
with some absolute constant
$C_0>0$.
This completes the proof of Theorems \ref{thm1} and \ref{thm2}.
\end{proof}

We finish this section with the proof of
Corollaries \ref{cor1.2} and \ref{cor1.4}.
\begin{proof}[Proof of Corollaries \ref{cor1.2} and \ref{cor1.4}]
For simplicity, we only give
the proof of Corollary \ref{cor1.4} with Case 1.
If $v_0 \equiv 0$ and
\begin{align*}%
	\| v \|_{L^3(0,t_0;L^{\frac{3n}{n-1}, \infty})}^3
	\le \delta
	\left( \| v(t_0) \|_{L^2}^2 
		+ 2 \int_0^{t_0} \| \nabla v (\tau) \|_{L^2}^2 d\tau \right)
\end{align*}%
with some $t_0 \in (0,T)$ and $\delta \in (0, 1/C_0)$,
then we have $\nabla v \equiv 0$ on $\re^n \times (0,t_0)$,
namely, $v$ is independent of $x$ on $\re^n \times (0,t_0)$.
Further, since $v \in L^3(0,t_0; L^{q_1, r_1}(\re^n))$, we see that
$v \equiv 0$ on $\re^n \times (0,t_0)$.
\end{proof}

%%%%%%%%%%%%%%%%%%%%%%%%%%%%%%%%%%%%%%%%%%%%%%%%%%%
%%%%%%%%%%%%%%%%%%%%%%%%%%%%%%%%%%%%%%%%%%%%%%%%%%%
%%%%%%%%%%%%%%%%%%%%%%%%%%%%%%%%%%%%%%%%%%%%%%%%%%%
\section{Proof of Theorem \ref{thm3}}
We start with the vorticity equation.
Let $v$ be a smooth solution of \eqref{ns2}.
It is well-known that the vorticity $\omega = \rot v$ satisfies
\begin{align}
\label{eq_om}
	\omega_t - \Delta \omega + (v \cdot\nabla)\omega = 0.
\end{align}
Indeed, this can be easily proved by taking the rotation
to the equation \eqref{ns2}.
\begin{proof}[Proof of Theorem \ref{thm3}]
Let
$\psi = \psi (r) \in C_0^{\infty}(\re)$
be a test function satisfying
\[
	\psi(r) = \begin{cases} 1&|r| \le 1,\\ 0&|r| >2 \end{cases}
\]
and define
$\psi_R(x) := \psi ( \frac{x_1}{R} ) \psi (\frac{x_2}{R} )$.
Multiplying \eqref{eq_om} by
$\psi_R \omega$,
we have
\begin{align*}
	&\frac{d}{dt} \frac{1}{2} \int_{\Omega} |\omega|^2 \psi_R dx
	+ \int_{\Omega} |\nabla \omega|^2 \psi_R dx
	- \int_{\partial \Omega} \frac{\partial \omega}{\partial \nu} \omega \psi_R dx\\
	&\quad + \int_{\Omega} ( \nabla \omega \cdot \nabla \psi_R ) \omega dx
	+ \int_{\Omega} (v \cdot \nabla \omega) \omega \psi_R dx = 0.
\end{align*}
When $\Omega$ is an exterior domain, taking $R>0$ sufficiently large, 
we have
$\nabla \psi_R= 0$ on $\partial \Omega$.
Also, when
$\Omega$
is a perturbed half-space, we have
$\nabla \psi_R =0$
on
$\partial \Omega \cap \{ x \in \partial \Omega ; |x| \le R \}$
and
$\frac{\partial \psi_R}{\partial \nu} = 0$
on
$\{ x \in \partial \re^2_+ ; |x| \ge R \}$
for sufficiently large
$R>0$.
Therefore, we obtain
\begin{align*}
	\int_{\Omega} ( \nabla \omega \cdot \nabla \psi_R ) \omega dx
	&= \frac12 \int_{\partial \Omega} \nu \cdot ( \omega^2\nabla \psi_R ) dS
		- \frac12 \int_{\Omega} \omega^2 \Delta \psi_R dx\\
	&= - \frac12 \int_{\Omega} \omega^2 \Delta \psi_R dx.
\end{align*}
Also, the boundary condition
$v = 0$
on
$\partial \Omega$
implies
\begin{align*}
	\int_{\Omega} (v\cdot \nabla \omega) \omega \psi_R dx
	&= \frac{1}{2} \int_{\partial \Omega} v\cdot\nu \ \omega^2 \psi_RdS
		- \frac{1}{2} \int_{\Omega} v \omega^2 \cdot \nabla \psi_R dx\\
	&= -\frac{1}{2} \int_{\Omega} v \omega^2 \cdot \nabla \psi_R dx.
\end{align*}
Consequently, we have
\begin{align*}
	& \frac{d}{dt} \frac{1}{2} \int_{\Omega} |\omega|^2 \psi_R dx
	+ \int_{\Omega} |\nabla \omega|^2 \psi_R dx
	- \int_{\partial \Omega} \frac{\partial \omega}{\partial \nu} \omega \psi_R dx\\
	&\quad = \frac12 \int_{\Omega} \omega^2 \Delta \psi_R dx
		+ \frac{1}{2} \int_{\Omega} v \omega^2 \cdot \nabla \psi_R dx.
\end{align*}
Integrating both sides of the above identity over $[0,t]$, we have
\begin{align}
\label{enineq}
	&\frac{1}{2}\int_{\Omega} |\omega|^2 \psi_R dx
	+ \int_0^t \int_{\Omega} |\nabla \omega|^2 \psi_R dxd\tau
	- \int_0^t \int_{\partial \Omega} \frac{\partial \omega}{\partial \nu} \omega \psi_R dxd\tau \\
\nonumber
	&\quad \le \frac{1}{2} \int_{\Omega} |\omega (x,0)|^2 \psi_R dx
	+ \frac12 \int_0^t \int_{\Omega} \omega^2 \Delta \psi_R dxd\tau 
	+ \frac{1}{2} \int_0^t \int_{\Omega} v \omega^2 \cdot \nabla \psi_R dxd\tau.
\end{align}
To estimate the right-hand side, we use the assumption \eqref{vol}.
Let
\begin{align*}%
	\varepsilon_{\omega}(t;R) :=
	\sup_{(x,\tau) \in (\Omega \cap \{ |x| \ge R\} ) \times (0,t)}
		|x|^{1/2} | \omega (x,\tau) |.
\end{align*}%
Then, we have
$\lim_{R\to \infty} \varepsilon_{\omega}(t;R) = \varepsilon_{\omega}(t)$.
The second term of right-hand side of \eqref{enineq} is estimated as
\begin{align*}
	\left| \frac12 \int_0^t \int_{\Omega} \omega^2 \Delta \psi_R dxd \tau \right|
	&\le CR^{-3} \int_0^t \int_{\Omega \cap \{ R\le |x| \le 2R\}}
		\varepsilon_{\omega}^2(t;R) dxd\tau\\
	&\le CR^{-1} t \varepsilon_{\omega}(t;R)^2.
\end{align*}
On the other hand, the third term is estimated as
\begin{align*}
	&\left| \frac{1}{2} \int_0^t \int_{\Omega} v \omega^2 \cdot \nabla \psi_R dxd\tau \right| \\
	&\quad \le R^{-2} \| \nabla \psi \|_{L^{\infty}(\re^2)} \int_0^t\int_{\Omega\cap  \{ R\le |x| \le 2R\}}
		\| v \|_{L^{\infty}(\Omega\times (0,T))} \varepsilon_{\omega}(t;R)^2 dxd\tau \\
	&\quad \le C_0 \| v \|_{L^{\infty}(\Omega\times (0,T))}
		t \varepsilon_{\omega}(t;R)^2,
\end{align*}
where
$C_0$ is an absolute constant.
Thus, we obtain
\begin{align*}
	&\frac{1}{2}\int_{\Omega} |\omega|^2 \psi_R dx
	+ \int_0^t \int_{\Omega} |\nabla \omega|^2 \psi_R dxd\tau
	- \int_0^t \int_{\partial \Omega} \frac{\partial \omega}{\partial \nu} \omega \psi_R dxd\tau \\
	&\le \frac{1}{2} \int_{\Omega} |\omega(x,0)|^2 \psi_R dx
	+ CR^{-1} t \varepsilon(t;R)^2
	+ C_0 \| v \|_{L^{\infty}(\Omega\times (0,T))} t\varepsilon_{\omega}(t;R)^2.
\end{align*}
>From the assumption \eqref{pa_om}, we obtain
\begin{align*}%
	\lim_{R\to \infty}
	\int_0^t \int_{\partial \Omega} \frac{\partial \omega}{\partial \nu} \omega \psi_R dxd\tau
	= \int_0^t \int_{\partial \Omega} \frac{\partial \omega}{\partial \nu} \omega dxd\tau
	\le 0.
\end{align*}%
Therefore, letting
$R\to \infty$,
we conclude
\begin{align*}
	&\frac{1}{2}\int_{\Omega} |\omega|^2 dx
	+ \int_0^t \int_{\Omega} |\nabla \omega|^2 dxd\tau \\
	&\quad \le \frac{1}{2} \int_{\Omega} |\omega (x,0)|^2 dx
	+ C_0 \| v \|_{L^{\infty}(\Omega\times (0,T))} t \varepsilon_{\omega}(t)^2.
\end{align*} 
This completes the proof of Theorem \ref{thm3}. 
\end{proof}
\par
\bigskip
{\it Proof of Corollary \ref{cor1.7}}.  
Since $v_0 \equiv 0$ and since $\varepsilon_{\omega}(t) = 0$ for $t\in (0,T)$,
then it follows from (\ref{eneq_om}) that 
\begin{align*}%
	\frac{1}{2}\int_{\Omega} |\omega(x, t)|^2 dx
	+ \int_0^t \int_{\Omega} |\nabla \omega(x,\tau) |^2 dxd\tau \le 0,
\end{align*}%
which implies $\omega \equiv 0$ on $\Omega\times [0, t]$. 
Since $\diver v = 0$, we see that $v(\cdot, \tau)$ is harmonic in $\Omega$ for all 
$\tau \in [0, t]$.  Since $v(\cdot, \tau) = 0$ on $\partial \Omega$ and since 
$v$ is bounded in $\Omega\times(0, T)$, it follows from the maximum principle in unbounded 
domains(see e.g., Berestycki-Caffarelli-Nirenberg \cite[Lemma 2.1]{BeCaNi}) that 
$v(\cdot, \tau) \equiv 0$ on $\Omega$ for all $\tau \in [0, t]$.  
This proves Corollary \ref{cor1.7}.

\end{document}